\documentclass[10pt, reqno]{amsart}
\usepackage{amssymb}
\allowdisplaybreaks

%
\newcommand{\R}{{\mathbb{R}}}
\newcommand{\C}{{\mathbb{C}}}
\newcommand{\Z}{{\mathbb{Z}}}

%

\let\Re=\undefined\DeclareMathOperator*{\Re}{Re}
\let\Im=\undefined\DeclareMathOperator*{\Im}{Im}
\DeclareMathOperator*{\cosec}{cosec}

\DeclareMathOperator*{\Id}{Id}
\DeclareMathOperator{\E}{\mathbb{E}}
\DeclareMathOperator{\Prob}{{{\mathbb{P}}}}

\newcommand{\HS}{H_\text{\rm LHM}}
\newcommand{\HH}{H_\textrm{Heis}}
\newcommand{\SO}{\textrm{SO}}

\newcommand{\qtq}[1]{\quad\text{#1}\quad}
\newcommand{\eps}{\varepsilon}

\newcommand{\Haar}{\textrm{Haar}}

\newtheorem{theorem}{Theorem}[section]
\newtheorem{proposition}[theorem]{Proposition}
\newtheorem{lemma}[theorem]{Lemma}

\theoremstyle{definition}
\newtheorem{definition}[theorem]{Definition}
\newtheorem{remark}[theorem]{Remark}

%
%
\newcounter{smalllist}

\newenvironment{SLH}{\begin{list}{{\rm(\roman{smalllist})\hss}}{%
\setlength{\topsep}{0mm}\setlength{\parsep}{0mm}\setlength{\itemsep}{0mm}%
\setlength{\labelwidth}{2.0em}\setlength{\itemindent}{0em}\setlength{\leftmargin}{2.5em}\usecounter{smalllist}%
}}{\end{list}}
%
%

\def\tempsepline{{}-\!\!\!-\!\!\!-\!\!\!-\!\!\!-\!\!\!-\!\!\!-}
\def\tempsep{\smallskip\centerline{$\tempsepline\!\!\!\!\hbox to 0mm{\hss//\hss}\!\tempsepline$}\smallskip}

\begin{document}

\title[Invariant measures for integrable spin chains and discrete NLS]{Invariant measures for integrable spin chains and integrable discrete NLS}

\author[Y. Angelopoulos]{Yannis Angelopoulos}
\address{Department of Mathematics, UCLA, Los Angeles, USA}
\email{yannis@math.ucla.edu}

\author[R. Killip]{Rowan Killip}
\address{Department of Mathematics, UCLA, Los Angeles, USA}
\email{killip@math.ucla.edu}

\author[M. Visan]{Monica Visan}
\address{Department of Mathematics, UCLA, Los Angeles, USA}
\email{visan@math.ucla.edu}

\begin{abstract}
We consider discrete analogues of two well-known open problems regarding invariant measures for dispersive PDE, namely, the invariance of the Gibbs measure for the continuum (classical) Heisenberg model and the invariance of white noise under focusing cubic NLS.  These continuum models are completely integrable and connected by the Hasimoto transform; correspondingly, we focus our attention on discretizations that are also completely integrable and also connected by a discrete Hasimoto transform.   We consider these models on the infinite lattice $\Z$.

Concretely, for a completely integrable variant of the classical Heisenberg spin chain model (introduced independently by Haldane, Ishimori, and Sklyanin) we prove the existence and uniqueness of solutions for initial data following a Gibbs law (which we show is unique) and show that the Gibbs measure is preserved under these dynamics.  In the setting of the focusing Ablowitz--Ladik system, we prove invariance of a measure that we will show is the appropriate discrete analogue of white noise.

We also include a thorough discussion of the Poisson geometry associated to the discrete Hasimoto transform introduced by Ishimori that connects the two models studied in this article.
\end{abstract}

\maketitle

\section{Introduction}

The research detailed in this paper began with the consideration of the following problem: Can one prove invariance of the Gibbs measure for the one-dimensional continuum (classical) Heisenberg model:
\begin{equation}\label{SME}
\partial_t \vec S = - \vec S \times \Delta \vec S
\end{equation}
where $\vec S:\R_t\times\R_x \to \mathbb S^2$ describes the configuration of spins, $\times$ denotes the cross-product, and $\Delta=\partial_x^2$ is the spatial Laplacian.

This model is a special case of the Schr\"odinger maps equation (where general K\"ahler targets are allowed).  It is also associated with the names of Landau--Lifshitz (see \cite{LL} or \cite[\S69]{LL9}), who also introduced a damping term into these dynamics, and of Gilbert (see \cite{Gilbert}), who further refined their theory at high damping.  It is natural to also include an external magnetic field in \eqref{SME}; however, this would only complicate a problem that we already do not know how to solve.

Gibbs measure provides a statistical description of a physical system at thermal equilibrium and is dictated by the inverse temperature $\beta>0$, the Hamiltonian (or energy functional), and the underlying symplectic volume.

From a physical point of view, \eqref{SME} arises as the continuum limit of the classical Heisenberg spin-chain model
\begin{equation}\label{Heis}
\frac{d\ }{dt} \vec S_n = - \vec S_n \times\bigl( \vec S_{n+1} +  \vec S_{n-1} \bigr),
\end{equation}
describing the dynamics of a chain of spins $\vec S:\R_t\times\Z\to \mathbb S^2$.  This dynamics is Hamiltonian, being induced by the energy functional
\begin{equation}\label{HeisH}
\HH := \sum_{n\in \Z}  \tfrac12|\vec S_n - \vec S_{n+1}|^2
\end{equation}
with respect to the Poisson structure \eqref{PB} below, which is merely the vestige (in classical mechanics) of the standard (quantum mechanical) commutation relations for spins.  It is shown in \cite{Semiclassical} that the quantum mechanical spin chain reduces to this classical model in the limit of large spin per site. 

\begin{definition}[Poisson bracket]\label{D:pb} For fields $\vec S:\Z \to\mathbb{S}^2\subset\R^3$, we define the Poisson bracket via
\begin{equation}\label{PB}
  \bigl\{ \vec a \cdot \vec S_n,\, \vec b\cdot \vec S_m \bigr\} = \delta_{nm} \, \vec a \cdot (\vec S_n \times \vec b ).
\end{equation}
\end{definition}

The symplectic form associated to this Poisson bracket is the sum of the standard surface area on each copy of $\mathbb S^2$.  As it comes from a (closed) symplectic structure, this Poisson bracket is immediately guaranteed to obey the Jacobi identity, although this can also be checked directly via Lagrange's identity for the cross product.

Analogously, the continuum model \eqref{SME} is naturally associated to the Hamiltonian
$$
\int_{\R} |\nabla \vec S(x)|^2\,dx,
$$
which (formally at least) tells us that the associated Gibbs measure simply corresponds to Brownian paths on the sphere.  The key difficulty associated with the problem posed in the first paragraph of this paper is not to make sense of the Gibbs measure, but rather, to be able to make sense of the dynamics \eqref{SME} for such irregular data.

The study of Hamiltonian PDE at low regularity has been a topic of intensive study for many years now and has made it possible to prove the existence of dynamics for initial data sampled from Gibbs measures and thence the invariance (under the flow) of these Gibbs measures for a variety of Hamiltonian PDE.  We note, in particular, the pioneering work (on both fronts) of Bourgain, surveyed in \cite{Bourg:book}.

At this moment, the most powerful method for studying the Schr\"odinger maps equation at low regularity is via the Hasimoto transform.  Discovered in the study of vortex tubes in \cite{Hasimoto} and first applied to \eqref{SME} in \cite{Laks}, this mapping transforms solutions to \eqref{SME} into solutions to the focusing cubic NLS:
\begin{equation}\label{NLS}
i \psi_t  = - \partial_x^2 \psi - \tfrac12 |\psi|^2 \psi .
\end{equation}
Concretely, viewing $x\mapsto \vec S(t,x)$ as the field of tangents to an arc-length parameterized curve in $\R^3$, one defines
\begin{equation}\label{Hasi}
\psi(t,x) = \kappa(t,x) \exp\biggl\{-i\int_{-\infty}^x \tau(t,x')\,dx'\biggr\}
\end{equation}
where $\kappa$ denotes the curvature of the curve and $\tau$ its torsion.  Note that the energy of the spin wave is carried over to the mass of the solution to NLS,
\begin{equation}\label{HisM}
\int_\R |\nabla \vec S(x)|^2\,dx = \int_\R |\psi(x)|^2 \,dx,
\end{equation}
rather than to the traditional Hamiltonian for \eqref{NLS}.  Evidently, the Hasimoto map is not a Poisson map with respect to the \emph{standard} Poisson structure associated to NLS.

The presence of a second (compatible) Poisson structure for \eqref{NLS} is indicative of the well-known complete integrability of NLS (cf. \cite{Magri}).  The equation \eqref{SME} has also been shown to be completely integrable, both directly \cite{Takh} and via Hasimoto-type transformations \cite{Laks,ZakhTakh}.  While the problem of constructing dynamics for \eqref{SME} with initial data sampled from the Gibbs measure seems out of reach at the current moment, the complete integrability of this equation is, at least, propitious.

The original calculations used in deriving the Hasimoto transformation involve use of the Frenet--Serret formulae for curves.  As is well-known, this approach to the differential geometry of curves is poorly adapted to vanishing curvature.  These difficulties can be averted by adopting a parallel frame (cf. \cite{Bishop}) along the curve.  Indeed, this approach has lead to the development of Hasimoto-like transformations in the context of general K\"ahler targets, as well as for higher dimensional arrays of spins; see \cite{CSU,Ding,Koiso,RodRub}.

Regarded as a mapping of individual states (rather than trajectories), it is not difficult to see that the Hasimoto transform maps Brownian paths on the sphere to white noise on the line.  Setting aside whether this can be extended to trajectories (in any sense), this raises the question of studying NLS with white noise initial data.  This problem is well-known and currently open, for focusing and defocusing nonlinearities, both on the line and on the circle.  In fact, one would formally expect white noise measure to be invariant under the NLS flow.  For the state of the art in the low-regularity problem for NLS, we refer the reader to \cite{CarlesKappeler,Christ,CollianderOh,GrunHerr,GuoOh,KVZ,Kishimoto,KochTataru}, as well as \cite{BanVega,JerrardSmets} which study low-regularity problems originating directly from \eqref{SME}.  We include here several references considering problems on the circle or, what is equivalent, for periodic initial data.  As white noise constitutes non-decaying (indeed ergodic) data on the line, there is a strong analogy with the circle case.

One thing that is clearly understood in the circle setting is that one must renormalize \eqref{NLS} to have any hope of treating data at regularities below $L^2$; see \cite{GuoOh}.  At the very least, one must employ Wick ordering, which amounts to removing an infinite phase shift from solutions to the equation.

Once one accepts that renormalization may be necessary to make sense of the model \eqref{SME} for Gibbs distributed initial data, then one is compelled to return to the basic physics.  Not only should one endeavor to renormalize in a physical way, but the break-down of the effective model should also be regarded as casting doubt on its derivation from more elementary principles.  Concretely, one is lead to ask if \eqref{SME} is the proper continuum limit of \eqref{Heis} in the setting of thermal equilibrium.

For smooth initial data, the convergence of \eqref{Heis} to \eqref{SME} is shown rigourously in \cite{Bardos&c}.  Our hesitation in assuming that this result extends to low regularity data is most easily explained through consideration of the continuum limit of the discrete linear Schr\"odinger equation
\begin{equation}\label{DLS}
i\partial_t \psi_n = - \bigl( \psi_{n+1} + \psi_{n-1} \bigr),
\end{equation}
with initial data constructed by choosing each $\psi_n$ independently and identically distributed according to a complex Gaussian law.  It is easily shown (by Fourier transformation) that this measure is invariant under the flow.
Now, this measure and indeed these dynamics are left invariant by the transformation
$$
\psi_n \mapsto (-1)^n \bar\psi_n
$$
which shows that low-frequencies (slowly varying sequences) and very high frequencies (slowly varying modulus with alternating signs) contribute equally to the problem in question.  However, it is only for the low frequencies that one would traditionally conflate the Laplacian with its finite difference approximation.  For the model \eqref{DLS} with white-noise initial data, one is lead to posit that the continuum limit should be described (at the very least) by a \emph{pair} of linear Schr\"odinger equations: one for the low frequencies and one for the high frequencies.

While it is fair to say that the process of inverting the Hasimoto transform is one of integration, which would suppress the high frequencies, our immediate discussion has centered around the linear model \eqref{DLS}.  Nonlinearities would couple the low- and high-frequency portions of the solution and thus we cannot discount the possibility that the high-frequency components impact the low-frequency dynamics in a non-trivial way.

We should caution the reader that the preceding discussion is heuristic and that we are not asserting the existence of a Hasimoto-like transform attendant to \eqref{Heis}.  Nonetheless, we shall soon discuss a discrete spin chain model and a discrete nonlinear Schr\"odinger equation that are connected by such a Hasimoto-like transformation; moreover, both are completely integrable.  On the other hand, numerical evidence \cite{RobThom} suggests that the model \eqref{Heis} is not completely integrable.

Low regularity problems in dispersive PDE are inherently difficult, notwithstanding the additional difficulties stemming from passing to the continuum limit of a discrete model.  Past experience suggests the greatest chance of success if one works with a completely integrable model, which led us to seek out discrete analogues of \eqref{SME} and \eqref{NLS} that retain complete integrability and which are connected by a Hasimoto-like transformation.  This pursuit does not represent a disparagement of \eqref{Heis}, but rather, the belief that it may be more fruitfully treated as a perturbation of such a completely integrable analogue, rather than attacked directly.

Our search for an integrable discrete analogue of \eqref{Heis} was a very short one.  It is clearly documented in \cite{FT:book}:
\begin{equation}\label{E:S}
\frac{d\ }{dt} \vec S_n = - \vec S_n \times\biggl(\frac{2\vec S_{n+1}}{1+\vec S_n\cdot\vec S_{n+1}} + \frac{2\vec S_{n-1}}{1+\vec S_n\cdot\vec S_{n-1}} \biggr),
\end{equation}
which has Hamiltonian
\begin{equation}\label{HS}
\HS := \sum_n  -2\log\bigl(1 - \tfrac14|\vec S_n - \vec S_{n+1}|^2 \bigr)
\end{equation}
with respect to the standard Poisson structure \eqref{PB}.  Following this reference, we will refer to this model as the Lattice Heisenberg Model (LHM), which appeared independently in three papers \cite{Haldane,Ishimori,Skly} in the same year.

The book \cite{FT:book} also describes (following \cite{IK}) a transformation of the LHM to a completely integrable form of discrete NLS.  However, this mapping is essentially a stereographic projection at each position along the lattice and so is unlike the Hasimoto transform, which acts like a derivative.  It is not difficult to obtain a discrete analogue of the Hasimoto transformation, starting from \eqref{E:S} and mimicking the arguments in \cite{Hasimoto}; see the next section.  However, the answer (found by a different method) appears already in \cite{Ishimori}, which shows that the LHM can be transformed to the (focusing) Ablowitz--Ladik system,
\begin{equation}\label{E:AL}
i\tfrac{d\ }{dt} \alpha_n =-\bigl(1+ |\alpha_n|^2 \bigr)  \bigl[\alpha_{n+1}+\alpha_{n-1}\bigr] + 2\alpha_n.
\end{equation}
This model was introduced in \cite{AL} as an integrable discretization of \eqref{NLS}.

Informed by the preceding discussion, our immediate goals with regard to the models \eqref{E:S} and \eqref{E:AL} are now clear:
\begin{SLH}
\item Construct (unique) Gibbs measures for \eqref{E:S}.

\item Prove the existence and uniqueness of the dynamics \eqref{E:S} with initial data sampled from this measure.

\item Show that these dynamics leave said Gibbs measures invariant.

\item Determine a suitable discrete analogue of white-noise that is connected to the Gibbs measure for \eqref{E:S} via a discrete Hasimoto transformation.

\item Show that \eqref{E:AL} is well-posed for initial data sampled from this `white noise' measure and that the dynamics \eqref{E:AL} leaves this measure invariant.
\end{SLH}
This is what will be achieved in this paper.  The rather more challenging problem of taking a continuum limit in these results remains our ambition for the future. We note that the approach to constructing invariant measures for NLS by taking a continuum limit of the Ablowitz--Ladik system has already been shown to be successful in \cite{Vaninsky}.  In that paper, Vaninsky considers the defocusing problem on the circle and constructs an invariant measure associated to the conservation law at one degree of regularity higher than the Hamiltonian.  For convergence in the deterministic setting, see \cite{HongYang}, which works in the energy space, and references therein.

The existence and uniqueness of Gibbs measures for \eqref{E:S} will be proved in Proposition~\ref{P:Gibbs}.  While the prevailing method for proving dynamical invariance of Gibbs measures is based on finite-dimensional approximation, we eschew this methodology for the construction of the measure.  Instead, we adopt the \emph{intrinsic} definition of Gibbs measures introduced by Dobrushin, Lanford, and Rulle; see \cite{Dob,LR}.   We prove uniqueness of such Gibbs measures by using the Perron--Frobenius Theorem to show that the underlying Markov chain is mixing; see \eqref{1137}.

In order to prove invariance of the Gibbs measure, we need a more direct construction than the abstract existence and uniqueness given by Proposition~\ref{P:Gibbs}.  This is effected by using the discrete Hasimoto transformation in reverse to construct initial data for \eqref{E:S} from initial data for \eqref{E:AL}.  In fact, we will also construct solutions to \eqref{E:S} by this method, namely, by first constructing solutions to \eqref{E:AL} and then transferring them to \eqref{E:S}.  The virtues of employing the discrete Hasimoto transform here are the same as in the continuum case --- it transforms a quasilinear problem into a semilinear one, which makes it much easier to control both individual solutions and differences between pairs of solutions.

Up to now, we have avoided addressing one of the main deficiencies of the Hasimoto transform, namely, its failure to admit an invariant definition, both in the sense of dynamically invariant and in the sense of being independent of arbitrary choices.  This problem stems from the incompatible gauge invariances of the two equations involved: The spin models (both continuum and discrete) have a global $SO(3)$ gauge invariance corresponding to a collective rigid rotation of all the spins, while \eqref{NLS} and \eqref{E:AL} have global $U(1)\cong SO(2)$ phase invariance.  In the study of individual solutions, this nuisance is usually handled by fixing a gauge for the initial data and propagating the resulting frame through time, as necessary.  For \emph{statistical ensembles of solutions} (as considered here) this is unsatisfactory --- it leads to measurability issues and non-invariant measures (due to dynamical modifications of the gauge).  The remedy we adopt here is to \emph{randomize the gauge} and show that this randomization is dynamically invariant.

Our discussion of the discrete Hasimoto transform is divided into two parts: In Section~\ref{S:DH} we present its construction by paralleling the classical approach of \cite{Hasimoto}.  This will allow us to elucidate the Poisson structure of the discrete Hasimoto transformation more fully than appears to have been done before.  On the other hand, this approach breaks down whenever consecutive spins are parallel --- this is the discrete analogue of the problem of vanishing curvature in the Frenet--Serret description of curves.

In Section~\ref{S:AL to S}, we revisit the discrete Hasimoto transform in a manner parallel to modern treatments of the continuum version, which are based on parallel frames.  This approach does not suffer from problems with vanishing curvature; moreover, it is well-suited to randomization of the gauge.  Neither this approach nor that presented in Section~\ref{S:DH} is very close to that adopted in \cite{Ishimori}, where the discrete Hasimoto transform was first discovered.

Already in Section~\ref{S:DH}, it is possible to deduce what distribution should be assigned to initial data for the Ablowitz--Ladik system so that it corresponds to the Gibbs measure for \eqref{E:S} via the discrete Hasimoto transform.  The answer is given in \eqref{E:D:wn}.  The values at each site are statistically independent, as one might well imagine for a measure mimicking white noise.  However, their distribution is \emph{not} Gaussian  --- it has very long tails.  In fact, at inverse temperature $\beta>0$, we have $\alpha_n\in L^p(d\;\!\mathbb P)$ if and only if $p<2+\beta$.

In Section~\ref{S:AL GWP}, we first prove almost sure existence and uniqueness of solutions to \eqref{E:AL} for initial data sampled from the measure \eqref{E:D:wn}.  This is Theorem~\ref{T:AL GWP}.  We then show that this flow preserves the measure \eqref{E:D:wn}; this is Theorem~\ref{T:wn invariance}.  The key idea is to take a limit (uniform on bounded sets in space-time) of solutions to spatial truncations of the equation.  For such finite systems, global well-posedness follows from standard ODE techniques; see Proposition~\ref{P:trunc}.  Note that these methods cannot be applied in infinite volume.  First, as RHS\eqref{E:AL} is not globally Lipschitz, one can only hope to apply contraction mapping on a small time interval whose length is dictated by the size of the data.  But as our initial data is ergodic under translation, every possible local configuration will occur with positive density somewhere; thus no time interval is short enough to apply contraction mapping if one works globally in space.  Secondly, to pass from local to global well-posedness, one would like to apply conservation laws; however, all conserved quantities are infinite in this case.

The method we employ is a close analogue of that used by Bourgain \cite{Bourg:IMNLS} to construct solutions to defocusing NLS on the line with initial data sampled from Gibbs measure.  The principal novelty in this paper is in the implementation, where subtleties arise from the long tails in the distribution of the initial data.

The climax of the paper is Section~\ref{S:5} where we prove existence and uniqueness of the Gibbs measure for \eqref{E:S}, construct unique solutions associated to such initial data, and prove the resulting dynamics leaves the Gibbs measure invariant.  In summary, we prove

\begin{theorem}[Invariance of the Gibbs measure for LHM]\label{T:S GWP}
Fix $\beta>0$. For almost every initial data distributed according to the Gibbs measure $d\mu_{Gibbs}^\beta$, there exists a unique global good solution to the spin chain model \eqref{E:S}.  Moreover, the Gibbs measure $d\mu_{Gibbs}^\beta$ is left invariant by the flow of \eqref{E:S}.
\end{theorem}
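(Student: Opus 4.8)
The plan is to transport the entire problem to the Ablowitz--Ladik side, where existence, uniqueness, and measure invariance are already in hand (Theorems~\ref{T:AL GWP} and~\ref{T:wn invariance}), and then to pull everything back through the parallel-frame inverse Hasimoto transform constructed in Section~\ref{S:AL to S}. Write $\mathcal{H}^{-1}$ for the reconstruction map that takes an Ablowitz--Ladik configuration $\alpha=(\alpha_n)$ together with a gauge (an initial orthonormal frame $R\in\SO(3)$) and produces a spin configuration $\vec S=(\vec S_n)$ by propagating the parallel frame site-by-site, $\vec S_n$ being the distinguished vector of the $n$-th frame. The crucial structural fact, to be established in Section~\ref{S:AL to S}, is that $\mathcal{H}^{-1}$ intertwines the dynamics: if $\alpha(t)$ solves \eqref{E:AL} then $\vec S(t):=\mathcal{H}^{-1}(\alpha(t),R(t))$ solves \eqref{E:S}, where the gauge evolves by a configuration-dependent rotation $R(t)=\rho_t(\alpha(0),R)$.

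The first substantive step is a measure-identification lemma: the pushforward $(\mathcal{H}^{-1})_*\,(\mathbb{P}_{\mathrm{wn}}\otimes\Haar)$ of the white-noise measure \eqref{E:D:wn} tensored with Haar measure on $\SO(3)$ is exactly the Gibbs measure $d\mu_{Gibbs}^\beta$. The cleanest route is to verify that this pushforward satisfies the Dobrushin--Lanford--Ruelle conditions used to define the Gibbs measure, and then to invoke the uniqueness half of Proposition~\ref{P:Gibbs}; the site-independence in \eqref{E:D:wn} and the telescoping structure of the frame reconstruction are what make the conditional distributions match the specification generated by \eqref{HS}.

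With this in place, construction and uniqueness follow mechanically. For construction, given $\vec S(0)\sim d\mu_{Gibbs}^\beta$, the measure-identification lemma lets us write $\vec S(0)=\mathcal{H}^{-1}(\alpha(0),R)$ almost surely with $\alpha(0)\sim\mathbb{P}_{\mathrm{wn}}$ and $R\sim\Haar$ independent; Theorem~\ref{T:AL GWP} supplies a unique global good solution $\alpha(t)$, and $\vec S(t):=\mathcal{H}^{-1}(\alpha(t),\rho_t(\alpha(0),R))$ is then the desired good solution of \eqref{E:S} by the intertwining property. For uniqueness, two good solutions sharing initial data are carried by the forward transform to two Ablowitz--Ladik solutions with identical data and gauge, forced to coincide by Theorem~\ref{T:AL GWP}, hence equal after inversion. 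For invariance, let $\Phi^S_t$ and $\Phi^{AL}_t$ denote the two flows; the intertwining reads $\Phi^S_t\circ\mathcal{H}^{-1}=\mathcal{H}^{-1}\circ(\Phi^{AL}_t\times\rho_t)$. Since $\Phi^{AL}_t$ preserves $\mathbb{P}_{\mathrm{wn}}$ (Theorem~\ref{T:wn invariance}) and the fibered rotation action $\rho_t$ preserves $\Haar$, the product measure is preserved, and pushing forward through $\mathcal{H}^{-1}$ gives $(\Phi^S_t)_*\,d\mu_{Gibbs}^\beta=d\mu_{Gibbs}^\beta$.

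The main obstacle is the gauge, which is responsible for two delicate points. First, for the invariance argument one must show that the induced gauge evolution preserves Haar measure even though it is coupled to the evolving Ablowitz--Ladik data; this is where randomizing the gauge --- rather than fixing it --- pays off, since the accumulated frame rotation enters as a configuration-dependent group translation $R\mapsto g_t(\alpha)\,R$, and the bi-invariance of Haar measure on the compact group $\SO(3)$ makes this immaterial after a Fubini argument. Second, identifying the pushforward as precisely the Gibbs measure (rather than merely a Gibbs-type object) requires care: one must confirm that $\mathcal{H}^{-1}$ is a measurable bijection off a null set --- avoiding the degenerate antiparallel-spin configurations that the parallel-frame construction is designed to tolerate --- and that the resulting conditional distributions genuinely reproduce the specification of Proposition~\ref{P:Gibbs}. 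Establishing this measurability and the null-set exceptional behavior, while threading the gauge consistently through the construction, uniqueness, and invariance arguments, is the technical heart of the proof.
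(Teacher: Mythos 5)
Your overall architecture --- construct spin solutions by pulling back Ablowitz--Ladik solutions through the parallel-frame reconstruction with a Haar-randomized gauge, identify the pushforward of $d\mu_{wn}^\beta\otimes d\Haar$ with $d\mu_{Gibbs}^\beta$, and deduce invariance from Theorem~\ref{T:wn invariance} plus right-invariance of Haar measure under the configuration-dependent frame rotation --- is exactly the paper's strategy (Proposition~\ref{P:same}). The only cosmetic difference in the measure identification is that you propose to verify the DLR conditions and invoke the uniqueness half of Proposition~\ref{P:Gibbs}, whereas the paper computes the one-step conditional law of $\vec S_m(t)$ given the frame $\sigma$-algebra $\mathcal A_{m-1}$ (Lemma~\ref{L:cond}) and matches the marginals \eqref{uncond exp} directly; these amount to the same calculation.

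The genuine gap is in your uniqueness argument. You want to push two good spin solutions with common data \emph{forward} through the Hasimoto transform and appeal to uniqueness for Ablowitz--Ladik, but this presupposes a forward intertwining statement that is never established and is not free: you would need to show that for an arbitrary good solution $\vec S(t)$ of \eqref{E:S}, the per-time reconstruction of $(\alpha_n(t))$ from Proposition~\ref{P:Bishop Hasimoto} (together with some evolution of the base frame $P_0(t)$ consistent with $\vec S_0(t)=P_0(t)\vec e_3$) actually solves \eqref{E:AL} and is a \emph{good} AL solution in the sense of \eqref{hyp1}--\eqref{hyp2}, so that Theorem~\ref{T:AL GWP} applies. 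The paper only proves the converse direction (Theorem~\ref{T:AL to S}: AL solutions give spin solutions), and it deliberately avoids the forward direction by proving uniqueness of good spin solutions \emph{directly} on the spin side via a weighted-$\ell^2$ Gronwall argument (Proposition~\ref{P:uniq}), using the quantitative lower bounds on $1+\vec S_n\cdot\vec S_{n+1}$ built into \eqref{hyp1S}--\eqref{hyp2S}. This direct uniqueness is also what resolves the non-injectivity you flag: each $\vec S(0)$ arises from a circle's worth of lifts $(\alpha(0),\mathcal O)$, and it is Proposition~\ref{P:uniq} --- not AL uniqueness --- that guarantees the constructed solution depends only on $\vec S(0)$ and not on the chosen lift. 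To complete your route you would either have to prove the forward intertwining lemma (including the gauge consistency and the transfer of the goodness bounds) or substitute the paper's direct Gronwall uniqueness proof.
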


\section{The discrete Hasimoto transform}\label{S:DH}

Our goal in this section is to develop the discrete Hasimoto transform following closely the methodology expounded in the original work of Hasimoto \cite{Hasimoto}.

\begin{definition}\label{D:thetagamma} For a field $\vec S:\Z\to \mathbb{S}^2$, with no two consecutive spins parallel or antiparallel, we define coordinates $\theta_n\in(0,\pi)$ and $\gamma_n\in(-\pi,\pi]$ via
\begin{align*}
\cos(\theta_n) &= \vec S_n \cdot \vec S_{n+1} \\
\sin(\theta_{n-1})\sin(\theta_{n}) e^{i\gamma_n} &= (\vec S_{n-1} \times \vec S_{n} )\cdot (\vec S_n \times \vec S_{n+1} ) + i\, \vec S_{n-1} \cdot (\vec S_n \times \vec S_{n+1} ).
\end{align*}
\end{definition}

Note that $\theta_n$ measures the angle between consecutive spins and hence may be considered as a substitute for the curvature appearing in the original Hasimoto transformation.   However, this is not quite the correct choice, as we will see below.  The quantity $\gamma_n$ measures the (signed) angle between the planes spanned by $\{\vec S_{n-1},\vec S_n\}$ and $\{\vec S_{n},\vec S_{n+1}\}$.  As such, it is a natural analogue of the torsion of the curve appearing in the original Hasimoto transform.  We note that while $\gamma_n$ can be regarded as the torsion at site $n$, one should really regard $\theta_n$ as the curvature \emph{between} sites $n$ and $n+1$.  In this sense the coordinates are better seen as being indexed by interlacing lattices, which explains some asymmetry in the formulae that follow.

The functions $(\theta_n,\gamma_n)_{n\in\Z}$ do not form a complete set of coordinates.  Indeed, they are invariant under global rotations:
\begin{equation}
\vec S_n \mapsto \mathcal O \vec S_n  \quad\text{for all $n\in\Z$ and fixed $\mathcal O\in\text{SO}(3)$.}
\end{equation}
This is the only obstruction to inverting this change of coordinates, as is evident from our next lemma.

\begin{lemma}\label{L:recursive}
Given $\vec S_0,\vec S_1\in \mathbb{S}^2$, and $(\theta_n,\gamma_n)_{n\in\Z}$, one can reconstruct the full spin field.  Indeed,
\begin{align*}
\vec S_{n+1} &= \cos(\theta_n) \vec S_n + \tfrac{\sin(\theta_n)}{\sin(\theta_{n-1})} \Bigl[ \sin(\gamma_n) \vec S_{n-1} \times \vec S_n + \cos(\gamma_n) (\vec S_{n-1} \times \vec S_n) \times \vec S_n \Bigr] ,\\
\vec S_{n-1} &= \cos(\theta_{n-1}) \vec S_n + \tfrac{\sin(\theta_{n-1})}{\sin(\theta_{n})}\Bigl[ \sin(\gamma_n) \vec S_{n} \times \vec S_{n+1} - \cos(\gamma_n) (\vec S_{n} \times \vec S_{n+1}) \times \vec S_n \Bigr].
\end{align*}
These relations (and Definition~\ref{D:thetagamma}) also show that
\begin{align*}
\vec S_{n} \cdot \vec S_{n+1} &=\cos(\theta_n),  \\
\vec S_{n} \cdot \vec S_{n+2} &=\cos(\theta_n)\cos(\theta_{n+1}) - \sin(\theta_{n+1})\sin(\theta_{n})\cos(\gamma_{n+1}),  \\
\vec S_{n} \cdot \vec S_{n+3} &=\cos(\theta_{n})\bigl[ \cos(\theta_{n+1})\cos(\theta_{n+2}) - \cos(\gamma_{n+2})\sin(\theta_{n+1})\sin(\theta_{n+2}) \bigr] \\
&\quad +\bigl\{ -\bigl[\sin(\theta_{n+1})\cos(\theta_{n+2})+\cos(\theta_{n+1})\sin(\theta_{n+2})\cos(\gamma_{n+2})\bigr]\cos(\gamma_{n+1}) \\
&\quad +\sin(\theta_{n+2})\sin(\gamma_{n+1})\sin(\gamma_{n+2}) \bigr\}\sin(\theta_n).
\end{align*}
\end{lemma}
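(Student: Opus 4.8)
The plan is to build, at each lattice site $n$, an explicit orthonormal frame adapted to the pair $(\vec S_{n-1},\vec S_n)$ and to read off the coordinates of the neighboring spin in that frame. Since no two consecutive spins are parallel or antiparallel, the vector $\vec S_{n-1}\times\vec S_n$ is nonzero, has length $\sin(\theta_{n-1})$, and is orthogonal to $\vec S_n$; hence
$$\vec e_1=\vec S_n,\qquad \vec e_2=\tfrac{1}{\sin(\theta_{n-1})}\vec S_{n-1}\times\vec S_n,\qquad \vec e_3=\tfrac{1}{\sin(\theta_{n-1})}(\vec S_{n-1}\times\vec S_n)\times\vec S_n$$
is an orthonormal basis of $\R^3$. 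First I would compute the three coordinates of $\vec S_{n+1}$ against this frame. The $\vec e_1$ coordinate is $\vec S_{n+1}\cdot\vec S_n=\cos(\theta_n)$ by definition. For the $\vec e_2$ coordinate I would rewrite the scalar triple product $\vec S_{n+1}\cdot(\vec S_{n-1}\times\vec S_n)$ cyclically as $\vec S_{n-1}\cdot(\vec S_n\times\vec S_{n+1})$ and identify it, via the imaginary part of the defining relation for $\gamma_n$, as $\sin(\theta_{n-1})\sin(\theta_n)\sin(\gamma_n)$, giving coordinate $\sin(\theta_n)\sin(\gamma_n)$. For the $\vec e_3$ coordinate I would use the Lagrange identity $(\vec S_{n-1}\times\vec S_n)\cdot(\vec S_n\times\vec S_{n+1})=\cos(\theta_{n-1})\cos(\theta_n)-\vec S_{n-1}\cdot\vec S_{n+1}$ together with the real part of the defining relation and the expansion $(\vec S_{n-1}\times\vec S_n)\times\vec S_n=\cos(\theta_{n-1})\vec S_n-\vec S_{n-1}$; this yields $\sin(\theta_n)\cos(\gamma_n)$. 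Assembling the three coordinates reproduces the first displayed recursion, and the identity $\cos^2(\theta_n)+\sin^2(\theta_n)=1$ confirms $\vec S_{n+1}$ is recovered as a unit vector.

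The second recursion is established identically, now using the frame attached to $(\vec S_n,\vec S_{n+1})$, i.e.\ with $\vec S_{n-1}\times\vec S_n$ replaced by $\vec S_n\times\vec S_{n+1}$, and decomposing $\vec S_{n-1}$. The coordinates come out as $\cos(\theta_{n-1})$, $\sin(\theta_{n-1})\sin(\gamma_n)$, and $-\sin(\theta_{n-1})\cos(\gamma_n)$; the sign reversal on the last is exactly the orientation flip in the triple product $(\vec S_n\times\vec S_{n+1})\times\vec S_n=\vec S_{n+1}-\cos(\theta_n)\vec S_n$. With this in hand, reconstruction is immediate: starting from $\vec S_0,\vec S_1$, the first recursion applied with increasing $n$ determines $\vec S_2,\vec S_3,\dots$, while the second applied with decreasing $n$ determines $\vec S_{-1},\vec S_{-2},\dots$; the hypothesis that no two consecutive spins are parallel keeps every denominator $\sin(\theta_k)$ away from zero throughout.

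The dot-product identities then follow by dotting the recursion for $\vec S_{n+k}$ against $\vec S_n$ and reducing with the same triple-product tools. The case $k=1$ is the definition; for $k=2$ one dots the (index-shifted) first recursion with $\vec S_n$, notes that the $\vec e_2$ term drops out because $\vec S_n\cdot(\vec S_n\times\vec S_{n+1})=0$, and uses $(\vec S_n\times\vec S_{n+1})\times\vec S_{n+1}=\cos(\theta_n)\vec S_{n+1}-\vec S_n$ to get $\vec S_n\cdot\big((\vec S_n\times\vec S_{n+1})\times\vec S_{n+1}\big)=-\sin^2(\theta_n)$, which produces the stated formula. I expect the $k=3$ identity to be the only laborious step. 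Here I would expand $\vec S_{n+3}$ by the recursion, dot with $\vec S_n$, substitute the scalar triple product $\vec S_n\cdot(\vec S_{n+1}\times\vec S_{n+2})=\sin(\theta_n)\sin(\theta_{n+1})\sin(\gamma_{n+1})$ and the expansion $(\vec S_{n+1}\times\vec S_{n+2})\times\vec S_{n+2}=\cos(\theta_{n+1})\vec S_{n+2}-\vec S_{n+1}$, and then feed in the already-proved $k=2$ formula for the remaining $\vec S_n\cdot\vec S_{n+2}$ factors. The one point requiring care is that the terms carrying $1/\sin(\theta_{n+1})$ recombine through $\cos^2(\theta_{n+1})-1=-\sin^2(\theta_{n+1})$ so that all spurious denominators cancel; once this is observed, the surviving terms collect precisely into the displayed combination of products of $\cos\gamma$'s together with the single $\sin(\gamma_{n+1})\sin(\gamma_{n+2})$ term. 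Thus the only real obstacle is organizing this last expansion, which is bookkeeping rather than a conceptual difficulty.
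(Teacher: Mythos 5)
Your proof is correct and follows essentially the same route as the paper: both express $\vec S_{n+1}$ and $\vec S_{n-1}$ in the orthonormal frames built from $\vec S_n$, $\vec S_{n-1}\times\vec S_n$, $(\vec S_{n-1}\times\vec S_n)\times\vec S_n$ (respectively the primed frame attached to $(\vec S_n,\vec S_{n+1})$), reading the coordinates off Definition~\ref{D:thetagamma} via the cyclic triple product and Lagrange's identity. The dot-product identities you derive by iterating the recursion check out exactly, including the cancellation of the $1/\sin(\theta_{n+1})$ factors through $\cos^2(\theta_{n+1})-1=-\sin^2(\theta_{n+1})$ in the $k=3$ case.
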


\begin{proof}
Note that
\begin{equation}\label{E:frame}
\tfrac{1}{\sin(\theta_{n-1})}(\vec S_{n-1} \times \vec S_n) \times \vec S_n, \quad \tfrac{1}{\sin(\theta_{n-1})} \vec S_{n-1} \times \vec S_n, \qtq{and} \vec S_n
\end{equation}
and
\begin{equation}\label{E:frame'}
\tfrac{1}{\sin(\theta_{n})}(\vec S_{n} \times \vec S_{n+1}) \times \vec S_n, \quad \tfrac{1}{\sin(\theta_{n})} \vec S_{n} \times \vec S_{n+1}, \qtq{and} \vec S_n
\end{equation}
form positively oriented orthonormal bases for $\R^3$.  The first two identities follow by expressing $\vec S_{n+1}$ using \eqref{E:frame} and $\vec S_{n-1}$ using \eqref{E:frame'}.  In particular, the first identity shows that $\theta_n$ and $\gamma_n$ are the traditional spherical polar coordinates for $\vec S_{n+1}$ in this frame.  More precisely, $\theta_n$ represents the colatitude of $\vec S_{n+1}$ relative to a north pole $\vec S_n$.  Analogously, $\gamma_n$ denotes the longitude of $\vec S_{n+1}$ with prime meridian passing through $-\vec S_{n-1}$; this is the sensible choice, since for a slowly varying curve $n\mapsto \vec S_n$, the points $\vec S_{n+1}$ and $\vec S_{n-1}$ will tend to be on opposite sides of $\vec S_{n}$.
\end{proof}

To elucidate the Poisson structure introduced in Definition~\ref{D:pb} at the level of $(\theta_n,\gamma_n)_{n\in\Z}$, we record the following proposition.

\begin{proposition}\label{P:PBthetagamma}
Among the functions $\{\theta_n,\gamma_n : n\in \Z\}$, all non-zero Poisson brackets are as follows:
\begin{center}\def\linespreader{\vphantom{\Big(}}

\vspace*{1ex}

\begin{tabular}{|c||c|}
\hline
$f$            & $\{f,\theta_n\}$\linespreader\\\hline\hline
$\gamma_{n-1}$ & $-\cosec(\theta_{n-1})\cos(\gamma_n)$\linespreader\\\hline
$\theta_{n-1}$ & $\sin(\gamma_{n})$ \linespreader\\\hline
$\gamma_{n}$   & $\cot(\theta_n/2) + \cot(\theta_{n-1})\cos(\gamma_n)$\linespreader\\\hline
$\theta_{n}$   & $0$\linespreader\\\hline
$\gamma_{n+1}$ & $-\cot(\theta_n/2) - \cot(\theta_{n+1})\cos(\gamma_{n+1})$\linespreader\\\hline
$\theta_{n+1}$ & $-\sin(\gamma_{n+1})$ \linespreader\\\hline
$\gamma_{n+2}$ & $\cosec(\theta_{n+1})\cos(\gamma_{n+1})$ \linespreader\\\hline
\end{tabular}

\vspace*{2ex}

\begin{tabular}{|c||c|}
\hline
$f$            & $\{f,\gamma_n\}$\linespreader\\\hline\hline
$\gamma_{n-2}$ & $-{\sin(\gamma_{n-1})}{\cosec(\theta_{n-2})\cosec(\theta_{n-1})}$\linespreader\\\hline
$\gamma_{n-1}$ & $\bigl[{\cot(\theta_{n-2})\sin(\gamma_{n-1})+\cot(\theta_{n})\sin(\gamma_{n})}\bigr]{\cosec(\theta_{n-1})}$\linespreader\\\hline
$\gamma_{n}$   & $0$\linespreader\\\hline
$\gamma_{n+1}$ & $-\bigl[{\cot(\theta_{n-1})\sin(\gamma_{n})+\cot(\theta_{n+1})\sin(\gamma_{n+1})}\bigr]{\cosec(\theta_{n})}$\linespreader\\\hline
$\gamma_{n+2}$ & ${\sin(\gamma_{n+1})}{\cosec(\theta_{n})\cosec(\theta_{n+1})}$ \linespreader\\\hline
\end{tabular}

\vspace*{1ex}

\end{center}
together with those determined by the above via anti-symmetry.
\end{proposition}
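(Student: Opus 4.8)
The plan is to reduce every entry in the two tables to a single ``master formula'' for the bracket and then evaluate it using the orthonormal frames of Lemma~\ref{L:recursive}.

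First I would record the elementary brackets of the spins. Letting $\vec a,\vec b$ range over an orthonormal basis in \eqref{PB} shows that the Hamiltonian vector field of any spin component generates a rotation of the corresponding spin; concretely, for any smooth $g$ depending on the spin field,
\[
\{\vec S_n,\, g\} = \vec S_n \times \nabla_{\vec S_n} g,
\]
where $\nabla_{\vec S_n}$ is the ambient gradient in $\R^3$ (the cross product discards the radial part, so the extension of $g$ off the sphere is immaterial). Combining this with the Leibniz rule and the chain rule gives, for any two functions $f,g$ of the spin field,
\[
\{f,g\} = \sum_{k\in\Z} (\nabla_{\vec S_k} f)\cdot\bigl(\vec S_k \times \nabla_{\vec S_k} g\bigr).
\]
This identity already explains the sparsity of the tables: since $\theta_n$ depends only on $\vec S_n,\vec S_{n+1}$ and $\gamma_n$ only on $\vec S_{n-1},\vec S_n,\vec S_{n+1}$, the sum is supported on the (at most three) sites where the two arguments share a dependence, and a bracket vanishes identically whenever these supports are disjoint.

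Second I would compute the gradients. For $\theta_n$, differentiating $\cos\theta_n=\vec S_n\cdot\vec S_{n+1}$ gives $\nabla_{\vec S_n}\theta_n=-\csc(\theta_n)\vec S_{n+1}$ and $\nabla_{\vec S_{n+1}}\theta_n=-\csc(\theta_n)\vec S_n$; substituting into the master formula collapses each $\{\theta_m,\theta_n\}$ and $\{\gamma_m,\theta_n\}$ to a scalar triple product of spins, exactly of the form appearing in Definition~\ref{D:thetagamma} and in the $\vec S_n\cdot\vec S_{n+2}$, $\vec S_n\cdot\vec S_{n+3}$ relations of Lemma~\ref{L:recursive}. (As a check, this immediately returns $\{\theta_{n-1},\theta_n\}=\sin\gamma_n$ and $\{\theta_{n+1},\theta_n\}=-\sin\gamma_{n+1}$.) The decisive ingredient is the gradient of $\gamma_n$, which I would obtain by writing $\gamma_n=\arg Z_n$ with $Z_n$ the complex quantity of Definition~\ref{D:thetagamma}, so that $\nabla_{\vec S_k}\gamma_n=\Im\bigl(\nabla_{\vec S_k} Z_n/Z_n\bigr)$ with $|Z_n|=\sin\theta_{n-1}\sin\theta_n$; the numerator is computed by the product rule on the dot and cross products, and the outcome is organized by expanding it in the positively oriented frames \eqref{E:frame} and \eqref{E:frame'}. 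In those frames the triple products $(\nabla f)\cdot(\vec S\times\nabla g)$ reduce to products of sines and cosines of the $\theta$'s and $\gamma$'s, and the half-angle $\cot(\theta_n/2)$ in the tables emerges from combining terms of the shape $(1\pm\cos\theta_n)/\sin\theta_n$.

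Finally, I would assemble the entries. Antisymmetry of the bracket halves the labour, and each remaining entry follows by substituting the gradients, contracting against the appropriate spin, and simplifying with the dot-product identities already recorded in Lemma~\ref{L:recursive}. The main obstacle is the gradient of $\gamma_n$ with respect to the middle spin $\vec S_n$, which enters all three factors $\vec S_{n-1}\times\vec S_n$, $\vec S_n\times\vec S_{n+1}$, and the polar axis simultaneously; here one must track the bookkeeping over up to five sites in the $\{\gamma_m,\gamma_n\}$ brackets and verify that the spurious radial contributions cancel, using $|\vec S_k|\equiv 1$. Keeping the orientation conventions consistent (the right-hand rule in each cross product and the sign in $\{\vec S_n,g\}$) is where errors are most likely; cross-checking a handful of entries against the antisymmetry relations, and against the fact that $\{\cdot,\theta_n\}$ generates a genuine rotation, provides a reliable guard.
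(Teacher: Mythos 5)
Your plan is sound and, carried through, would reproduce both tables; but it follows a genuinely different route from the paper's. You propose a uniform, brute-force computation: cast the bracket in Lie--Poisson form $\{f,g\}=\sum_k\nabla_{\vec S_k}f\cdot\bigl(\vec S_k\times\nabla_{\vec S_k}g\bigr)$, compute the gradients of $\theta_n$ and --- the genuinely hard part, which you correctly single out --- of $\gamma_n=\arg Z_n$ with respect to the middle spin, and then contract and simplify in the frames \eqref{E:frame}, \eqref{E:frame'}. The paper never differentiates $\gamma_n$ at all. It computes only two families of brackets directly from \eqref{PB}, namely $\{\vec S_m\cdot\vec S_{m+1},\vec S_n\cdot\vec S_{n+1}\}$ and $\{\vec S_{m-1}\cdot(\vec S_m\times\vec S_{m+1}),\vec S_n\cdot\vec S_{n+1}\}$, both of which involve only polynomial functions of the spins; the first yields $\{\theta_m,\theta_n\}$ and, as a byproduct, the identity $\sin\gamma_m=\{\theta_{m-1},\theta_m\}$. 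Feeding that identity into the Jacobi identity then (i) localizes the support of $\{\gamma_m,\theta_n\}$ to $m\in\{n-1,n,n+1,n+2\}$ and (ii) expresses the entire second table $\{\gamma_m,\gamma_n\}$ in terms of the first, so the only analytic input beyond polynomial brackets is the dot-product dictionary of Lemma~\ref{L:recursive}. Your approach buys a self-contained, purely mechanical scheme (and your locality-of-dependence argument for sparsity is a clean substitute for the paper's Jacobi-based localization, reaching the same conclusion); its cost is precisely the bookkeeping you flag --- the gradient of $\arg Z_n$ through all three factors at the shared site --- which is what the paper's Jacobi shortcut is designed to avoid. Either way the final reduction to sines and cosines of $\theta$'s and $\gamma$'s goes through Lemma~\ref{L:recursive}, and your proposed cross-checks (antisymmetry, and the recovery of $\{\theta_{n\mp1},\theta_n\}=\pm\sin\gamma_n$) are consistent with the stated tables.
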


\begin{proof}
The exact calculations are lengthy; we summarize the method, rather than give all details.

Using Definitions~\ref{D:pb} and \ref{D:thetagamma}, it is easy to compute
\begin{align*}
\bigl\{ \vec S_m & \cdot \vec S_{m+1} ,\, \vec S_n \cdot \vec S_{n+1} \bigr\} \\
&= \delta_{m,n+1} \vec S_{n+2}\cdot (\vec S_{n+1} \times \vec S_n)-\delta_{m,n-1} \vec S_{m+2}\cdot (\vec S_{m+1} \times \vec S_m)\\
&=- \delta_{m,n+1} \sin(\theta_n)\sin(\theta_{n+1})\sin(\gamma_{n+1})+\delta_{m,n-1} \sin(\theta_m)\sin(\theta_{m+1})\sin(\gamma_{m+1}).
\end{align*}
On the other hand,
$$
\bigl\{ \vec S_m \cdot \vec S_{m+1} ,\, \vec S_n \cdot \vec S_{n+1} \bigr\} = \bigl\{ \cos(\theta_m),\, \cos(\theta_n) \bigr\} = \sin(\theta_m)\sin(\theta_n)\{\theta_m,\,\theta_n\}.
$$
This yields all Poisson brackets of the form $\{\theta_m,\theta_n\}$.

By the Jacobi identity and the previous result,
\begin{align*}
\cos(\gamma_m)\{\gamma_m,\theta_n\}=\{\sin(\gamma_m),\theta_n\} &=  \{\{\theta_{m-1},\theta_m\},\theta_n\} \\
&= - \{\{\theta_m,\theta_n\},\theta_{m-1}\} - \{\{\theta_n,\theta_{m-1}\},\theta_m\},
\end{align*}
which shows (using the previous result again) that this quantity is zero unless $m\in\{n-1,n,n+1,n+2\}$. To actually determine the values in these four cases, we compute
$$
\bigl\{ \vec S_{m-1} \cdot \bigl(\vec S_m \times \vec S_{m+1}\bigr) ,\, \vec S_n \cdot \vec S_{n+1} \bigr\} = \bigl\{ \sin(\theta_{m-1})\sin(\theta_m)\sin(\gamma_m),\, \cos(\theta_n) \bigr\}
$$
directly from Definition~\ref{D:pb}.  As the example
$$
\bigl\{ \vec S_{n-2} \cdot \bigl(\vec S_{n-1} \times \vec S_{n}\bigr) ,\, \vec S_n \cdot \vec S_{n+1} \bigr\}
    = \bigl(\vec S_{n-2}\cdot\vec S_{n}\bigr)\bigl(\vec S_{n-1}\cdot\vec S_{n+1}\bigr) - \bigl(\vec S_{n-1}\cdot\vec S_{n}\bigr)\bigl(\vec S_{n-2}\cdot\vec S_{n+1}\bigr)
$$
shows, this requires expressing various dot products in terms of $\theta$ and $\gamma$.  This is possible through applications of Lemma~\ref{L:recursive}.  Performing these computations yields all the information presented in the first table.

Arguing as previously, we have
\begin{align*}
\{\sin(\gamma_m),\sin(\gamma_n)\} &= \{ \{\theta_{m-1},\theta_m\},\sin(\gamma_n)\} \\
&= \{ \{\sin(\gamma_n),\theta_m\},\theta_{m-1}\} - \{ \{\sin(\gamma_n),\theta_{m-1}\},\theta_m\}.
\end{align*}
Thus the values shown in the second table can be deduced from those in the first, with only the expenditure of sufficient labour.
\end{proof}

\begin{definition}[Discrete Hasimoto transform]\label{D:alpha}
For a field $\vec S:\Z\to \mathbb{S}^2$, we define complex coordinates $\alpha_n\in \C$ via
\begin{align}\label{E:D:alpha}
\alpha_n = \tan(\theta_n/2) e^{-i\Gamma(n)} \qtq{where} \Gamma(n) := \sum_{\ell\leq n} \gamma_\ell
\end{align}
and $\theta_n\in(0,\pi)$ and $\gamma_n\in (-\pi,\pi]$ are as in Definition~\ref{D:thetagamma}.
\end{definition}

Included in this definition is the assertion that $\tan(\theta_n/2)$ is the proper discrete analogue of the curvature in \eqref{Hasi}.  Unaware that it appears already in \cite[equation (14a)]{Ishimori}, we originally  intuited this relation by comparing conserved quantities for \eqref{E:S} and \eqref{E:AL}; see \eqref{HLHMtoAL} below.

The domain of the functions $\alpha_n$ is a rather thin set within all possible spin configurations.  Not only must we avoid consecutive spins being parallel or anti-parallel, but we must now also constrain the torsion $\gamma_n$ to be summable.  Below we will determine the Poisson brackets of these functions of the spins and find that the results are polynomials in these same functions.  This induces a Poisson structure on the algebra of finitely supported smooth functions of the variables $\alpha_n$, which may now be regarded as an independent object, free from the constraints just mentioned.  From this perspective, one may simply take the results of Proposition~\ref{P:PBa} as the definition of a Poisson structure on such an algebra, which happens to be inspired by the spin model.  However, before one simply accepts the formulae below as the definition of a Possion structure, one must verify the Jacobi identity.

While it is indeed elementary (though tedious) to verify the Jacobi identity directly --- indeed, we did this as a check on our computations --- this is unnecessary since the domain of the functions $\alpha_n$ is nonetheless rich enough to guarantee that this identity is inherited from the corresponding relation for \eqref{PB}.

\begin{proposition}\label{P:PBa}
Poisson brackets among the functions $\{\Re\alpha_n,\Im\alpha_n : n\in \Z\}$ are as follows
\begin{align*}
\{ \Re \alpha_n, \Im \alpha_m\}=
\begin{cases}
-\tfrac{1+|\alpha_m|^2}2 \Im \alpha_n \Im(\alpha_{m-1}-\alpha_{m+1}), & n\geq m+2,\\
-\tfrac{1+|\alpha_m|^2}2 \Bigl[ \Im \alpha_{n} \Im(\alpha_{m-1}-\alpha_{m+1}) + \tfrac{1+|\alpha_{n}|^2}2\Bigr], & n= m+1,\\
\tfrac{1+|\alpha_n|^2}2-\tfrac{1+|\alpha_n|^2}2\Re(\alpha_n\overline{\alpha_{n-1}}), & n=m, \\
-\tfrac{1+|\alpha_n|^2}2 \Bigl[\Re \alpha_{m} \Re(\alpha_{n-1}-\alpha_{n+1}) + \tfrac{1+|\alpha_{m}|^2}2\Bigr], & n= m-1,\\
-\tfrac{1+|\alpha_n|^2}2 \Re \alpha_m \Re(\alpha_{n-1}-\alpha_{n+1}), & n\leq m-2,
\end{cases}
\end{align*}
and
\begin{align*}
\{ \Re \alpha_n, \Re \alpha_m\} &= -\tfrac{1+|\alpha_m|^2}2 \Im \alpha_n \Re(\alpha_{m-1}-\alpha_{m+1}),\qtq{for} n\geq m+1, \\
\{ \Im \alpha_n, \Im \alpha_m\} &= \tfrac{1+|\alpha_m|^2}2 \Re \alpha_n \Im(\alpha_{m-1}-\alpha_{m+1}),\phantom{+}\qtq{for} n\geq m+1.
\end{align*}
These determine all remaining cases through anti-symmetry.
\end{proposition}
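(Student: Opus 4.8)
The plan is to extend the Poisson bracket to complex-valued functions by bilinearity and to compute the two complex brackets $\{\alpha_n,\alpha_m\}$ and $\{\alpha_n,\overline{\alpha_m}\}$; the three real brackets in the statement are then recovered from $\Re\alpha=\tfrac12(\alpha+\overline{\alpha})$ and $\Im\alpha=\tfrac1{2i}(\alpha-\overline{\alpha})$. Writing $u_n:=\tan(\theta_n/2)$, so that $\alpha_n=u_n e^{-i\Gamma(n)}$ with $\Gamma(n)=\sum_{\ell\le n}\gamma_\ell$, the Leibniz (derivation) property reduces each such bracket to the three families $\{\theta_n,\theta_m\}$, $\{\theta_n,\gamma_\ell\}$, and $\{\gamma_k,\gamma_\ell\}$, all of which are tabulated in Proposition~\ref{P:PBthetagamma}. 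The chain rule supplies the remaining Jacobians: for any $g$ one has $\{u_n,g\}=\tfrac12\sec^2(\theta_n/2)\{\theta_n,g\}$ and $\{e^{-i\Gamma(n)},g\}=-i\,e^{-i\Gamma(n)}\{\Gamma(n),g\}$.

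The essential feature to exploit is that $\Gamma(n)$ is a \emph{cumulative} sum, so that $\{\theta_n,\Gamma(m)\}=\sum_{\ell\le m}\{\theta_n,\gamma_\ell\}$ and $\{\Gamma(n),\Gamma(m)\}=\sum_{k\le n}\sum_{\ell\le m}\{\gamma_k,\gamma_\ell\}$. Because the brackets in Proposition~\ref{P:PBthetagamma} have finite range (they vanish once the indices differ by more than two), these sums are finite and telescope, with the cotangents and cosecants of the tables collapsing through the single identity $\cosec(x)-\cot(x)=\tan(x/2)$. For instance $\{\theta_n,\Gamma(n)\}=\tan(\theta_{n-1}/2)\cos(\gamma_n)-\cot(\theta_n/2)$, while $\{\theta_n,\Gamma(m)\}$ vanishes for $m\le n-2$ and equals $\tan(\theta_{n-1}/2)\cos(\gamma_n)-\tan(\theta_{n+1}/2)\cos(\gamma_{n+1})$ for $m\ge n+2$. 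To return to the variables $\alpha$, I would use $1+|\alpha_n|^2=\sec^2(\theta_n/2)$ together with $\alpha_n\overline{\alpha_{n-1}}=|\alpha_n||\alpha_{n-1}|e^{-i\gamma_n}$ (since $\Gamma(n)-\Gamma(n-1)=\gamma_n$); this recombines the surviving $\cos(\gamma)$ and $\sin(\gamma)$ factors into real and imaginary parts of products of neighbouring $\alpha$'s, while the lone phase $e^{-i\Gamma(n)}$ carried by $\alpha_n$ converts $\cos(\Gamma(n))$ and $\sin(\Gamma(n))$ into $\Re\alpha_n$ and $\Im\alpha_n$.

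As a representative computation, on the diagonal only the modulus and phase of $\alpha_n$ interact, giving $\{\alpha_n,\overline{\alpha_n}\}=2i\,u_n\{u_n,\Gamma(n)\}=i(1+|\alpha_n|^2)\bigl[\Re(\alpha_n\overline{\alpha_{n-1}})-1\bigr]$, so that $\{\Re\alpha_n,\Im\alpha_n\}=-\tfrac1{2i}\{\alpha_n,\overline{\alpha_n}\}$ reproduces the $n=m$ entry. The remaining entries follow the same template, organized by the cases $n-m\le-2$, $n-m\in\{-1,0,1\}$, and $n-m\ge2$; in the long-range cases $|n-m|\ge2$ the bracket $\{u_n,u_m\}$ vanishes and only the cumulative phase contributes, which is exactly why those answers involve $\alpha_n$ only through $\Im\alpha_n$ (or $\Re\alpha_n$) and the site $m$ only through the local combination $\alpha_{m-1}-\alpha_{m+1}$. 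I expect the main labour, and the only genuine obstacle, to be the bookkeeping of the telescoping double sum $\{\Gamma(n),\Gamma(m)\}$ and the careful matching of boundary terms in the transitional cases $n=m\pm1$, where an extra $\tfrac12(1+|\alpha|^2)$ survives the collapse; once the identity $\cosec-\cot=\tan(\cdot/2)$ is applied systematically, each case reduces to elementary algebra. No separate check of the Jacobi identity is needed, as the remark preceding the statement explains that it is inherited from \eqref{PB}.
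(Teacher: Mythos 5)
Your proposal is correct and follows essentially the same route as the paper: reduce everything via the Leibniz/chain rule to the brackets $\{\Gamma(n),\theta_k\}$ and $\{\Gamma(n),\Gamma(m)\}$, which are finite telescoping sums of the entries of Proposition~\ref{P:PBthetagamma} (collapsing via $\cosec x-\cot x=\tan(x/2)$), and then translate back using $1+|\alpha_n|^2=\sec^2(\theta_n/2)$ and $\alpha_n\overline{\alpha_{n-1}}=u_nu_{n-1}e^{-i\gamma_n}$. Your intermediate formulas for $\{\theta_n,\Gamma(m)\}$ and your worked $n=m$ case agree with the paper's tabulated values and with the stated result.
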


\begin{proof}
Using Proposition~\ref{P:PBthetagamma}, it is elementary to verify that
\begin{align*}
\{ \Gamma(n), \theta_k\} = \begin{cases}
-\tan(\theta_{k-1}/2)\cos(\gamma_k)+\tan(\theta_{k+1}/2)\cos(\gamma_{k+1}),&n\geq k+2,\\
-\tan(\theta_{k-1}/2)\cos(\gamma_k)-\cot(\theta_{k+1})\cos(\gamma_{k+1}),&n= k+1,\\
-\tan(\theta_{k-1}/2)\cos(\gamma_k)+\cot(\theta_k/2),&n=k,\\
-\cosec(\theta_{k-1})\cos(\gamma_k),&n= k-1,\\
0, &n\leq k-2.\end{cases}
\end{align*}

To complete the calculations, we also need to know $\{ \Gamma(n), \Gamma(m)\}$ for all $n$ and~$m$.  Due to the finite-range nature of the Poisson bracket detailed in Proposition~\ref{P:PBthetagamma}, these are easily determined.  Indeed,
\begin{align*}
\{ \Gamma(m+1), \Gamma(m)\} &= \{ \Gamma(m+1)-\Gamma(m), \Gamma(m)\} = \{ \gamma_{m+1}, \gamma_m+\gamma_{m-1}\} \\
&= \bigl[\tan(\theta_{m-1}/2)\sin(\gamma_{m})-\cot(\theta_{m+1})\sin(\gamma_{m+1})\bigr]\cosec(\theta_{m}).
\end{align*}
Similarly, for $n\geq m+2$, we have
\begin{align*}
\{ \Gamma(n), \Gamma(m)\} =  \bigl[\tan(\theta_{m-1}/2)\sin(\gamma_{m})+\tan(\theta_{m+1}/2)\sin(\gamma_{m+1})\bigr]\cosec(\theta_{m}).
\end{align*}
These determine all other cases via antisymmetry.
\end{proof}

Using the new coordinates, we can rewrite the Hamiltonian \eqref{HS} as
\begin{align}\label{HLHMtoAL}
\HS =  \sum_n  4\log\bigl[\sec\bigl(\tfrac{\theta_n}2\bigr)\bigr] = \sum_n 2\log\bigl(1+|\alpha_n|^2\bigr).
\end{align}
This is the discrete analogue of \eqref{HisM}.  The right-hand side here is a well-known conservation law in the context of the Ablowitz--Ladik system, where it plays the role analogous to that played by the mass for the NLS equation.  Concretely, for solutions to \eqref{E:AL}, we have
$$
\partial_t \log\bigl(1+|\alpha_n|^2\bigr)  = - 2\Im\bigl(\bar\alpha_n \alpha_{n+1}\bigr) + 2\Im\bigl(\bar\alpha_{n-1} \alpha_{n}\bigr).
$$

As mentioned before, we initially derived \eqref{E:D:alpha} by finding what relation between $\theta_n$ and $|\alpha_n|$ was necessary to arrive at the identity \eqref{HLHMtoAL}.

For comparison, the Hamiltonian corresponding to the Heisenberg spin chain model \eqref{Heis} becomes
\begin{align*}
\HH = \sum_n  2\sin^2\bigl(\tfrac{\theta_n}2\bigr) = \sum_n \tfrac{2|\alpha_n|^2}{1+|\alpha_n|^2}.
\end{align*}

\begin{lemma}\label{L:elltwo}
Consider the phase space $\ell^2(\Z)$ endowed with the Poisson bracket laid out in Proposition~\ref{P:PBa}. The Hamiltonian \eqref{HLHMtoAL} induces the focusing Ablowitz--Ladik flow \eqref{E:AL}, which is globally wellposed.
\end{lemma}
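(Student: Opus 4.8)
The plan is to prove the lemma in two stages. In the first I verify, by direct computation, that the Hamiltonian vector field generated by \eqref{HLHMtoAL} through the Poisson bracket of Proposition~\ref{P:PBa} is exactly the vector field in \eqref{E:AL}; in the second I establish global well-posedness of \eqref{E:AL} on $\ell^2(\Z)$ using an elementary local theory together with the conservation law recorded immediately above the lemma. For the first stage, writing $\HS=\sum_m h_m$ with $h_m:=2\log(1+|\alpha_m|^2)$ and applying the chain rule, the equation of motion $\dot\alpha_n=\{\alpha_n,\HS\}$ becomes
$$\dot\alpha_n=\sum_m \frac{2}{1+|\alpha_m|^2}\,\bigl\{\alpha_n,|\alpha_m|^2\bigr\},\qquad \bigl\{\alpha_n,|\alpha_m|^2\bigr\}=2\Re\alpha_m\,\{\alpha_n,\Re\alpha_m\}+2\Im\alpha_m\,\{\alpha_n,\Im\alpha_m\},$$
so that everything reduces to the brackets tabulated in Proposition~\ref{P:PBa}.

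The computation is organized by the position of $m$ relative to $n$. For the far-right indices $m\geq n+2$ the contribution to $\{\alpha_n,|\alpha_m|^2\}$ vanishes identically---the two surviving monomials cancel through $\Re\alpha_m\Im\alpha_m-\Im\alpha_m\Re\alpha_m=0$. For the far-left indices $m\leq n-2$ the factor $1+|\alpha_m|^2$ carried by the relevant brackets cancels against the logarithm, leaving $\{\alpha_n,h_m\}=2i\alpha_n\bigl(\Re(\bar\alpha_{m-1}\alpha_m)-\Re(\bar\alpha_m\alpha_{m+1})\bigr)$, which telescopes over $m\leq n-2$ to the single boundary term $-2i\alpha_n\Re(\bar\alpha_{n-2}\alpha_{n-1})$, the flux at $-\infty$ vanishing because $\alpha_m\to0$. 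Only the three near-diagonal indices $m\in\{n-1,n,n+1\}$ then require direct evaluation from Proposition~\ref{P:PBa}, and it remains to check that, once combined with the telescoped boundary term, they assemble into $\dot\alpha_n=i(1+|\alpha_n|^2)(\alpha_{n+1}+\alpha_{n-1})-2i\alpha_n$, which is precisely \eqref{E:AL}. This also exhibits the vector field as finite-range, hence well defined on $\ell^2$. I expect this final bookkeeping---tracking the near-diagonal cases and matching signs---to be the most laborious, though entirely mechanical, part of the argument.

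For the second stage I first note that the nonlinearity in \eqref{E:AL} defines a locally Lipschitz map on $\ell^2(\Z)$: the shifts $\alpha\mapsto(\alpha_{n\pm1})$ are isometries, and since $\|fg\|_{\ell^2}\leq\|f\|_{\ell^2}\|g\|_{\ell^2}$ exhibits $\ell^2(\Z)$ as a Banach algebra under pointwise multiplication, the cubic nonlinearity is real-analytic, with local Lipschitz constant controlled by the square of the $\ell^2$-norm on bounded sets. The Picard--Lindel\"of theorem then furnishes a unique maximal solution $\alpha\in C^1\bigl([0,T_{\max});\ell^2\bigr)$, depending continuously on the data, together with the blow-up alternative that $\|\alpha(t)\|_{\ell^2}\to\infty$ as $t\uparrow T_{\max}$ whenever $T_{\max}<\infty$. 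To exclude this I invoke the conservation law displayed just before the lemma: since $\partial_t\log(1+|\alpha_n|^2)$ is a discrete divergence, summing over $|n|\leq N$ and letting $N\to\infty$---the boundary fluxes $\Im(\bar\alpha_N\alpha_{N+1})$ tending to $0$ because $\alpha(t)\in\ell^2$---shows that $P:=\prod_n(1+|\alpha_n(t)|^2)$ is conserved, and finite at $t=0$ since $\sum_n\log(1+|\alpha_n(0)|^2)\leq\|\alpha(0)\|_{\ell^2}^2<\infty$.

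The crucial observation is that this one conservation law controls both norms uniformly in time. Each factor $1+|\alpha_n(t)|^2$ is at least $1$, hence bounded by the whole product $P$, which yields the a priori bound $\sup_n|\alpha_n(t)|^2\leq P-1$. Writing $M^2:=P-1$ and using the concavity estimate $x\leq\frac{M^2}{\log(1+M^2)}\log(1+x)$ for $x\in[0,M^2]$, this upgrades to
$$\|\alpha(t)\|_{\ell^2}^2=\sum_n|\alpha_n(t)|^2\leq\frac{M^2}{\log(1+M^2)}\sum_n\log\bigl(1+|\alpha_n(t)|^2\bigr)=\frac{M^2\log P}{\log(1+M^2)},$$
uniformly in $t\in[0,T_{\max})$. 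This uniform bound contradicts the blow-up alternative, so $T_{\max}=\infty$ and \eqref{E:AL} is globally well-posed on $\ell^2(\Z)$. The only point requiring care is the passage to infinite volume in the conservation law---justifying that the telescoped boundary fluxes vanish for merely $\ell^2$ data---but this is immediate from $\alpha(t)\in\ell^2$ and the fact that the local solution is $C^1$ into $\ell^2$.
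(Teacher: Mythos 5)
Your proposal is correct and follows essentially the same route as the paper: you compute the Hamiltonian vector field site-by-site from Proposition~\ref{P:PBa} (with the far-left contributions telescoping and the far-right ones vanishing, exactly as in the paper's displayed case-by-case formula), and then combine local Lipschitz well-posedness on $\ell^2(\Z)$ with conservation of \eqref{HLHMtoAL} to go global. The near-diagonal cases you defer do assemble into \eqref{E:AL}, and your concavity argument correctly supplies the detail---merely asserted in the paper---that the conserved Hamiltonian controls the $\ell^2$ norm.
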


\begin{proof}
It is evident that the infinite sum \eqref{HLHMtoAL} converges for $\alpha\in\ell^2(\Z)$.  Moreover, from Proposition~\ref{P:PBa}, we have
\begin{align*}
 i \bigl\{ \alpha_n , 2\log( 1+|\alpha_k|^2 )\bigr\} = \begin{cases}
-2\Re\bigl[\bar\alpha_k (\alpha_{k-1}-\alpha_{k+1})\bigr]\alpha_n, &n\geq k+2 \\
-2\Re\bigl[\bar\alpha_k (\alpha_{k-1}-\alpha_{k+1})\bigr]\alpha_n-( 1+|\alpha_n|^2 )\alpha_k, &n=k+1 \\
-2\Re\bigl[\bar\alpha_k \alpha_{k-1}\bigr]\alpha_n + 2\alpha_n, &n=k \\
-( 1+|\alpha_n|^2 )\alpha_k, &n=k-1 \\
0, & n\leq k-2\end{cases}
\end{align*}
which shows that the induced vector fields are also summable, yielding
\begin{equation}\label{E:ALagain}
i\partial_t \alpha_n = \sum_k i \bigl\{ \alpha_n , 2\log( 1+|\alpha_k|^2 )\bigr\} = -\bigl(1+ |\alpha_n|^2 \bigr)  \bigl[\alpha_{n+1}+\alpha_{n-1}\bigr] + 2\alpha_n
\end{equation}
which is the Ablowitz--Ladik flow \eqref{E:AL}.

The local well-posedness of \eqref{E:ALagain} is trivial, since RHS\eqref{E:ALagain} defines a locally Lipschitz vector field on $\ell^2(\Z)$.  This extends to global well-posedness due to conservation of the Hamiltonian \eqref{HLHMtoAL}, which controls the $\ell^2$ norm.
\end{proof}

While the context in which we derived Lemma~\ref{L:elltwo} explains the connection of the Ablowitz--Ladik equation to \eqref{E:S}, it does little to help us understand invariant measures.  We would like to truncate in space, obtain invariant measures in that setting, and then pass to the infinite volume limit.  Such spatial truncations are rather violently at odds with the infinite-range character of the Poisson structure given in Proposition~\ref{P:PBa}.

Secondly, the traditional construction of invariant measures in Hamiltonian mechanics rests on the invariance of phase volume (Liouville's Theorem).  It is far from clear what phase volume we should associate with the Poisson structure we have studied thus far.

The remedy to both our troubles lies in the fact that the Ablowitz--Ladik equation is bi-Hamiltonian (in the sense of \cite{Magri}), as we will explain.  Let us begin by recalling the standard Hamiltonian formulation of the Ablowitz--Ladik equation, as laid out in \cite{FT:book}, for example.

\begin{definition}\label{D:pb0}
We define a second Poisson structure on the algebra generated by $\{\Re\alpha_n,\Im\alpha_n : n\in \Z\}$ as follows:
$$
\bigl\{ \Re \alpha_n, \Im \alpha_m \bigr\}_0 = - \bigl\{ \Im \alpha_n, \Re \alpha_m \bigr\}_0 = (1+|\alpha_n|^2) \delta_{nm}
$$
and all other brackets are zero.
\end{definition}

We note that this corresponds the symplectic structure
\begin{equation}\label{omega zero}
\omega_0 = \sum_{n\in\Z} (1+|\alpha_n|^2)^{-1} d\Re(\alpha_n)\wedge d\Im(\alpha_n)
\end{equation}
and that the flow \eqref{E:AL} is generated by
\begin{equation}\label{H AL}
H_\text{AL} := \sum_{n\in\Z} -\Re(\bar\alpha_n\alpha_{n+1})+\log(1+|\alpha_n|^2),
\end{equation}
which Poisson commutes with $\HS$.

While this shows that the Ablowitz--Ladik equation admits a second Hamiltonian interpretation, this is slightly less than being bi-Hamiltonian.  One needs to show that the two Poisson structures are compatible, namely, that any linear combination of the two Poisson brackets remains a Poisson bracket.   The only obstruction to compatibility is the Jacobi identity.

\begin{theorem}
The Poisson brackets of Proposition~\ref{P:PBa} and Definition~\ref{D:pb0} are compatible.
\end{theorem}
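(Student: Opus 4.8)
The plan is to reduce the statement to the vanishing of a single "mixed" expression and then to verify that vanishing on generators. Write $\{\cdot,\cdot\}$ for the bracket of Proposition~\ref{P:PBa} and $\{\cdot,\cdot\}_0$ for that of Definition~\ref{D:pb0}; both are antisymmetric biderivations on the algebra of finitely supported smooth functions of the $\alpha_n$. Bilinearity and antisymmetry of any linear combination $\{\cdot,\cdot\}+\lambda\{\cdot,\cdot\}_0$ are automatic, so compatibility is precisely the assertion that this combination obeys the Jacobi identity for every $\lambda$. Expanding the Jacobiator of the combination and collecting powers of $\lambda$ gives $\lambda^0 J + \lambda^2 J_0 + \lambda\,M$, where $J,J_0$ are the Jacobiators of the two individual brackets and
$$
M(f,g,h) = \{\{f,g\},h\}_0 + \{\{f,g\}_0,h\} + \text{(the two cyclic permutations of $f,g,h$)}.
$$
Now $J\equiv 0$, since (as explained before Proposition~\ref{P:PBa}) the bracket $\{\cdot,\cdot\}$ is inherited from the symplectic spin bracket \eqref{PB}; and $J_0\equiv 0$, since $\{\cdot,\cdot\}_0$ is the Poisson bracket of the closed two-form $\omega_0$ in \eqref{omega zero}. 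Hence the pencil is Poisson for all $\lambda$ if and only if $M\equiv 0$.

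The decisive structural point is that $M$ is (twice) the evaluation of the Schouten--Nijenhuis bracket of the two Poisson bivectors, and therefore corresponds to a trivector field: it is a totally antisymmetric triderivation in $(f,g,h)$, notwithstanding that its individual double-bracket summands are not derivations. Consequently $M$ is determined by its values on the generators $\{\Re\alpha_n,\Im\alpha_m\}$, and it suffices to check $M\equiv 0$ on triples of these. The number of genuinely distinct triples is finite: the whole construction is invariant under the lattice shift $n\mapsto n+1$, so only relative positions matter; total antisymmetry lets us fix an ordering of the three indices; and because $\{\cdot,\cdot\}_0$ is supported on the diagonal (nonzero only for $\Re\alpha_n,\Im\alpha_n$ at a common site, with value $1+|\alpha_n|^2$), the summands of $M$ built from $\{\cdot,\cdot\}_0$ collapse drastically.

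With these reductions, what remains is the explicit evaluation of $M$ on coordinate triples using the formulae of Proposition~\ref{P:PBa} together with $\{\Re\alpha_n,\Im\alpha_n\}_0=1+|\alpha_n|^2$. I expect the main obstacle to be not conceptual but bookkeeping, and specifically the \emph{infinite range} of $\{\cdot,\cdot\}$: unlike $\{\cdot,\cdot\}_0$, the brackets $\{\Re\alpha_n,\Im\alpha_m\}$ do not vanish for $|n-m|$ large, so a priori many separated index-configurations must be considered. The feature that renders this tractable is that, for $|n-m|\ge 2$, every long-range bracket in Proposition~\ref{P:PBa} factorizes as a function of the data near $n$ times a function of the data near $m$ (for instance $\Im\alpha_n$ against $-\tfrac{1+|\alpha_m|^2}2\Im(\alpha_{m-1}-\alpha_{m+1})$). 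Pushed through the Leibniz rule, this product structure forces the contributions of any widely separated index to reorganize into canceling pairs, so that only configurations with all three indices within a bounded window survive. Verifying the vanishing of $M$ on each of these finitely many short-range configurations completes the proof; this is precisely the elementary but tedious check alluded to after Proposition~\ref{P:PBa}, which one may also use as an independent verification of the bracket formulae.
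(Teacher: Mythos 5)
Your argument is correct and follows essentially the same route as the paper: both reduce compatibility to the vanishing of the mixed Jacobiator $\sum_{\mathrm{cyc}}\{\{F,G\},H\}_0+\{\{F,G\}_0,H\}$ (the individual Jacobi identities being already known), observe that it suffices to test this on the generators $\Re\alpha_n,\Im\alpha_n$, and note that the zero-range structure of $\{\cdot,\cdot\}_0$ together with translation invariance collapses the verification to a finite list of polynomial identities to be checked by hand or by computer algebra. Your added remarks --- identifying the mixed term with the Schouten--Nijenhuis trivector to justify testing on generators, and explaining how the factorized form of the long-range brackets makes the far-index cases uniform --- merely flesh out steps the paper leaves implicit.
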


\begin{proof}
As we already know that each of the Poisson brackets obeys the Jacobi identity individually, it suffices to show that
$$
\sum \{F,\{G,H\}_0\} +\{F,\{G,H\}\}_0 =0,
$$
where the sum is taken over the three cyclic permutations of the functions $F$, $G$, and $H$.  Moreover, it suffices to select each of these three functions from the collection $\{\Re \alpha_n, \Im\alpha_n : n\in\Z\}$.  Due to the zero-range structure of the $\{\,,\}_0$ bracket, these observations reduce matters to a finite collection of computations that one simply has to grind through.  As a finite system of polynomial identities, this is also amenable to checking via computer algebra systems.
\end{proof}

While the existence of multiple Hamiltonian interpretations of the Ablowitz--Ladik system has been know for some time (see \cite{Lozano} and references therein), to the best of our knowledge no previous authors have verified compatibility; see, for example, \cite[\S5]{ErcLozano}.

As described earlier, our interest in this alternate Poisson structure stems from the problem of constructing invariant measures for truncations of the system.

We obtain our finite-volume model by truncating the Hamiltonian \eqref{H AL}: Given an integer $K>0$,
\begin{align}\label{AL H free}
H_\text{AL}^K :=  \sum_{n=-K}^{K-1} -\Re(\bar\alpha_n\alpha_{n+1})+ \sum_{n=-K}^{K}\log(1+|\alpha_n|^2)
\end{align}
generates the following dynamics
\begin{equation}\label{AL free}
i\tfrac{d\ }{dt} \alpha_n = \{\alpha_n,H_\text{AL}^K \}_0 =
\begin{cases}
-\bigl(1+ |\alpha_{-K}|^2 \bigr)  \alpha_{-K+1}+ 2\alpha_{-K},       &n=-K,\\
-\bigl(1+ |\alpha_n|^2 \bigr)  \bigl[\alpha_{n+1}+\alpha_{n-1}\bigr] + 2\alpha_n, &|n|\leq K-1,\\
-\bigl(1+ |\alpha_K|^2 \bigr)  \alpha_{K-1}+ 2\alpha_K,              &n=K,
\end{cases}
\end{equation}
which is easily seen to conserve
\begin{align}\label{H FT free}
\HS^K :=  \sum_{|n|\leq K}  4\log\bigl[\sec\bigl(\tfrac{\theta_n}2\bigr)\bigr] = \sum_{|n|\leq K} 2\log\bigl(1+|\alpha_n|^2\bigr).
\end{align}

At the level of the spins, $\HS^K$ is the energy functional corresponding to free boundary conditions --- the spins at the ends of the chain only couple to their one neighbour.  One could also consider other boundary conditions. However, we will prove uniqueness of both the Gibbs measure and the dynamics in infinite volume; thus, the choice of boundary condition has no effect.

\begin{proposition}\label{P:trunc} The truncated Ablowitz--Ladik system \eqref{AL free} is globally wellposed and conserves the following `white-noise' probability measure
\begin{equation}
d\mu_\textit{wn}^{\beta,K} = \prod_{-K\leq n\leq K}\frac{1+2\beta}{\pi} \frac {d\text{\rm Area}(\alpha_n)}{(1+|\alpha_n|^2)^{2+2\beta}}
\end{equation}
for any $\beta>0$.
\end{proposition}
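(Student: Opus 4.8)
The plan is to handle the two claims --- global well-posedness and invariance --- separately, exploiting that \eqref{AL free} is a genuine finite-dimensional Hamiltonian system on $\C^{2K+1}$.

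For global well-posedness, I would first observe that RHS\eqref{AL free} is a polynomial, hence locally Lipschitz, vector field on $\C^{2K+1}$; the Picard--Lindel\"of theorem then yields a unique local-in-time solution. To upgrade this to a global solution, I would use conservation of \eqref{H FT free}. Introducing the flux $J_n := 2\Im(\bar\alpha_{n-1}\alpha_n)$, a direct computation from \eqref{AL free} gives $\partial_t\log(1+|\alpha_n|^2) = J_n - J_{n+1}$ at each interior site, and the free boundary conditions are arranged precisely so that the end fluxes $J_{-K}$ and $J_{K+1}$ vanish (there being no sites $-K-1$ or $K+1$). Summing over $|n|\le K$ telescopes to $\tfrac{d}{dt}\HS^K = 0$. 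Since $\HS^K = \sum_{|n|\le K}2\log(1+|\alpha_n|^2)$ bounds $\max_{|n|\le K}|\alpha_n|$ along the flow, no solution can escape to infinity in finite time, so every local solution extends globally.

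For invariance of $d\mu_\textit{wn}^{\beta,K}$, the structural fact I would exploit is that \eqref{AL free} is the Hamiltonian flow generated by \eqref{AL H free} relative to the bracket of Definition~\ref{D:pb0}, whose symplectic form \eqref{omega zero} carries the Liouville (phase) volume $\prod_{|n|\le K}(1+|\alpha_n|^2)^{-1}\,d\mathrm{Area}(\alpha_n)$. By Liouville's theorem this phase volume is preserved by the flow. I would then split the density of the target measure relative to this volume: writing $(1+|\alpha_n|^2)^{-(2+2\beta)} = (1+|\alpha_n|^2)^{-1}\,(1+|\alpha_n|^2)^{-(1+2\beta)}$ and recalling \eqref{H FT free}, one finds $d\mu_\textit{wn}^{\beta,K} \propto \exp\{-\tfrac{1+2\beta}{2}\HS^K\}\prod_{|n|\le K}(1+|\alpha_n|^2)^{-1}\,d\mathrm{Area}(\alpha_n)$. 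Thus $d\mu_\textit{wn}^{\beta,K}$ is the product of the invariant Liouville volume with a weight that is a function of the conserved quantity $\HS^K$; such a measure is automatically preserved (if $\nu$ is invariant and $g$ is conserved, then $g\,\nu$ is invariant). That this is a probability measure, and that the normalizing constant is $\tfrac{1+2\beta}{\pi}$ per site, follows from the elementary integral $\int_\C (1+|\alpha|^2)^{-(2+2\beta)}\,d\mathrm{Area}(\alpha) = \pi/(1+2\beta)$, which is finite precisely when $\beta > -\tfrac12$.

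In finite dimensions I do not expect a genuine obstacle; the two points that demand care are (i) confirming that the free boundary conditions really do annihilate the flux at both ends, so that $\HS^K$ is \emph{exactly} conserved, and (ii) correctly identifying the \emph{non-canonical} Liouville volume attached to $\omega_0$ and matching the exponent through $2+2\beta = 1 + (1+2\beta)$, so as to peel the conserved Gibbs weight off the invariant base volume. As an independent check one can bypass the symplectic formalism and verify the stationary Liouville equation $\sum_{|n|\le K}\bigl[\partial_{x_n}(\rho\,\dot x_n) + \partial_{y_n}(\rho\,\dot y_n)\bigr] = 0$ directly in real coordinates $\alpha_n = x_n + iy_n$, with $\rho = \prod_{|n|\le K}(1+|\alpha_n|^2)^{-(2+2\beta)}$; the two observations above are exactly what make this divergence collapse.
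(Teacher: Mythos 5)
Your argument is correct and follows essentially the same route as the paper: local well-posedness from the locally Lipschitz vector field, globalization via conservation of the coercive quantity $\HS^K$, and invariance by factoring $d\mu_\textit{wn}^{\beta,K}$ as $e^{-(\beta+\frac12)\HS^K}$ times the Liouville volume $\prod_n(1+|\alpha_n|^2)^{-1}\,d\mathrm{Area}(\alpha_n)$ attached to $\omega_0$. The extra details you supply (the telescoping flux identity with vanishing boundary fluxes, and the normalization integral $\int_\C(1+|\alpha|^2)^{-(2+2\beta)}\,d\mathrm{Area}=\pi/(1+2\beta)$) are accurate and merely flesh out steps the paper leaves implicit.
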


\begin{proof}
As RHS\eqref{AL free} is a Lipschitz function on $\C^{2K+1}$, local well-posedness follows immediately.  This can be made global in time due to conservation of the coercive quantity \eqref{H FT free}.

By writing
\begin{equation}\label{E:wn not Gibbs}
d\mu_\textit{wn}^{\beta,K} = \bigl(\tfrac{1+2\beta}{\pi}\bigr)^{2K+1}  e^{-(\beta+\frac12)\HS^K } \prod_n \frac{d\Re(\alpha_n)\wedge d\Im(\alpha_n)}{(1+|\alpha_n|^2)},
\end{equation}
we see that the preservation of this measure under the flow stems from conservation of $\HS^K$ and Liouville's Theorem on the preservation of phase volume (cf. \eqref{omega zero}).
\end{proof}

We note that \eqref{E:wn not Gibbs} deviates rather sharply from the Gibbs measure one would naturally associate with the system \eqref{AL free}: the inverse temperature is shifted and multiplies the analogue of mass, rather than the Hamiltonian. These anomalies will disappear when we pass back through the discrete Hasimoto transform --- we will see that under this correspondence, this measure does indeed map to the true Gibbs measure for the spin system.
These anomalies also serve to remind us of the subtle interrelation between the two Hamiltonian structures.

\section{The discrete Hasimoto transform via parallel frames}\label{S:AL to S}

In this section we revisit the discrete Hasimoto transform from the modern perspective of parallel frames.  In order to complete the program laid out in the introduction, we will need to show how to transfer solutions from the Ablowitz--Ladik system to the spin chain model.  This is the major impetus of this section; see Theorem~\ref{T:AL to S}.  We start by introducing some notation.  For $z\in \C$ we define the orthogonal matrix
\begin{equation}\label{Q}
\begin{aligned}
Q(z)
&= \frac{1}{1+|z|^2}\begin{bmatrix} 1-\Re(z^2) &  \Im (z^2) &  2\Re(z)\\[1ex]
            \Im(z^2) & 1+\Re(z^2) & -2\Im(z)\\[1ex]
            -2\Re(z) & 2\Im(z) & 1-|z|^2 \end{bmatrix}.
\end{aligned}
\end{equation}
Note that $Q(z)$ is the exponential of the antisymmetric matrix
\begin{equation}\label{q}
\begin{aligned}
q(z)=\begin{bmatrix} 0 &  0 &  2\arctan(|z|)\frac{\Re(z)}{|z|}\\[1ex]
            0 & 0 & -2\arctan(|z|)\frac{\Im(z)}{|z|}\\[1ex]
            -2\arctan(|z|)\frac{\Re(z)}{|z|} & 2\arctan(|z|)\frac{\Im(z)}{|z|} &0 \end{bmatrix}.
\end{aligned}
\end{equation}

\begin{proposition}\label{P:Bishop Hasimoto}
Let $\{\vec S_n\}_{n\in\Z}$ be a sequence of spins such that no two consecutive spins are antiparallel.  Let $P_0\in \SO(3)$ be such that
\begin{equation*}
\begin{aligned}
\vec S_0=P_0 \vec{e_3}
\end{aligned}.
\end{equation*}
Then there exists a unique sequence $\{\alpha_n\}_{n\in\Z}$ of complex numbers such that with
\begin{equation}\label{frame update}
Q_n=Q(\alpha_n) \qtq{and} P_{n+1}=P_nQ_n
\end{equation}
we have
\begin{equation}\label{1:48}
\vec S_n=P_n\vec{e_3}.
\end{equation}
Moreover, for all $n\in\Z$ we have
\begin{align}
\vec S_n \cdot \vec S_{n+1} &= \frac{1-|\alpha_n|^2}{1+|\alpha_n|^2} ,\label{FB consistent1}\\
\!\!\!(\vec S_{n-1} \times \vec S_{n} )\cdot (\vec S_n \times \vec S_{n+1} ) + i\, \vec S_{n-1} \cdot (\vec S_n \times \vec S_{n+1} )&= \frac{4\bar{\alpha}_n\alpha_{n-1}}{(1+|\alpha_n|^2)(1+|\alpha_{n-1}|^2)},\label{FB consistent2}
\end{align}
from which we see that the map $\{\vec S_n\}_{n\in\Z} \mapsto\{\alpha_n\}_{n\in\Z}$ agrees with the one constructed in Section~\ref{S:DH} modulo $U(1)$ gauge invariance.
\end{proposition}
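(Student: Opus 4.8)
The plan is to construct the sequence $\{\alpha_n\}$ recursively from the given frame $P_0$, then deduce the two identities by transferring the computation into the frame $P_n$, and finally match the result against the coordinates of Section~\ref{S:DH}.

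First I would record the single fact about $Q$ that drives everything: its action on $\vec{e_3}$ is
\[
Q(z)\vec{e_3} = \tfrac{1}{1+|z|^2}\bigl(2\Re z,\ -2\Im z,\ 1-|z|^2\bigr)^{\top},
\]
so that $z\mapsto Q(z)\vec{e_3}$ is an inverse stereographic projection: for every unit vector $\vec w$ whose third component satisfies $w_3\neq -1$, the equation $Q(z)\vec{e_3}=\vec w$ has the unique solution $z=(w_1-iw_2)/(1+w_3)$. Given $P_n\in\SO(3)$ with $\vec S_n=P_n\vec{e_3}$, the requirement \eqref{1:48} at $n+1$, namely $\vec S_{n+1}=P_nQ(\alpha_n)\vec{e_3}$, is equivalent to $Q(\alpha_n)\vec{e_3}=P_n^{-1}\vec S_{n+1}$; since $\vec S_{n+1}$ is not antiparallel to $\vec S_n$, the third component of $P_n^{-1}\vec S_{n+1}$ exceeds $-1$, so $\alpha_n$ is uniquely determined and $P_{n+1}:=P_nQ_n$ satisfies \eqref{frame update}. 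Running the same argument through $Q(\alpha_n)^{\top}\vec{e_3}=P_{n+1}^{-1}\vec S_n$ recovers $P_n$ uniquely from $P_{n+1}$, again using the non-antiparallel hypothesis. Iterating in both directions from $P_0$ therefore yields the unique $\{\alpha_n\}_{n\in\Z}$ and $\{P_n\}_{n\in\Z}$ obeying \eqref{frame update}--\eqref{1:48}.

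For the two identities I would use the $\SO(3)$-invariance of dot products together with the equivariance $(P\vec u)\times(P\vec v)=P(\vec u\times\vec v)$ for $P\in\SO(3)$ to conjugate into the frame $P_n$. Writing $\vec S_n=P_n\vec{e_3}$, $\vec S_{n+1}=P_nQ_n\vec{e_3}$, and $\vec S_{n-1}=P_nQ_{n-1}^{\top}\vec{e_3}$ (from $P_{n-1}=P_nQ_{n-1}^{-1}$), the relation \eqref{FB consistent1} is immediate from $\vec S_n\cdot\vec S_{n+1}=\vec{e_3}\cdot(Q_n\vec{e_3})$ and the display above. For \eqref{FB consistent2} I would set $\vec a:=Q_{n-1}^{\top}\vec{e_3}$ and $\vec b:=Q_n\vec{e_3}$, so that the two real quantities on the left become $(\vec a\times\vec{e_3})\cdot(\vec{e_3}\times\vec b)$ and $\vec a\cdot(\vec{e_3}\times\vec b)$; both $\vec a\times\vec{e_3}$ and $\vec{e_3}\times\vec b$ lie in the equatorial plane with components read directly off the display, and combining the two evaluations as a complex number collapses to $4\bar\alpha_n\alpha_{n-1}/[(1+|\alpha_n|^2)(1+|\alpha_{n-1}|^2)]$. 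This computation is elementary but is the step that most demands care with signs.

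Finally, to match Section~\ref{S:DH}, I would read off from \eqref{FB consistent1} that $|\alpha_n|^2=(1-\cos\theta_n)/(1+\cos\theta_n)=\tan^2(\theta_n/2)$, agreeing with the modulus in Definition~\ref{D:alpha}, and from \eqref{FB consistent2} compared with Definition~\ref{D:thetagamma} that $\arg\alpha_{n-1}-\arg\alpha_n=\gamma_n$, which matches the successive differences of $\Gamma(n)=\sum_{\ell\le n}\gamma_\ell$. These relations fix each $\alpha_n$ up to one common multiplicative phase. That residual phase is exactly the $U(1)$ ambiguity: the initial frame $P_0$ is pinned down by $\vec S_0=P_0\vec{e_3}$ only up to right multiplication by a rotation $R$ fixing $\vec{e_3}$, and the conjugation identity $R^{-1}Q(z)R=Q(e^{i\phi}z)$ (with $\phi$ the rotation angle of $R$) propagates through the recursion to replace every $\alpha_n$ by the common phase multiple $e^{i\phi}\alpha_n$. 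Hence the present map agrees with that of Section~\ref{S:DH} precisely modulo this global phase. I expect verifying this conjugation identity, and thereby identifying the residual freedom with the asserted $U(1)$ gauge invariance, to be the main conceptual obstacle; the existence/uniqueness recursion and the identities \eqref{FB consistent1}--\eqref{FB consistent2} are then routine.
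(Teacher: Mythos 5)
Your proposal is correct and follows essentially the same route as the paper: the recursion is driven by the observation that $z\mapsto Q(z)\vec e_3$ is the inverse stereographic projection onto $\mathbb S^2\setminus\{-\vec e_3\}$, and the identities \eqref{FB consistent1}--\eqref{FB consistent2} are obtained by conjugating into the frame $P_n$ (the paper computes the real part of \eqref{FB consistent2} via the Lagrange identity rather than multiplying out the equatorial cross products, but this is an immaterial difference). Your explicit discussion of the residual $U(1)$ freedom via $R^{-1}Q(z)R=Q(e^{i\phi}z)$ is a correct elaboration of what the paper leaves implicit.
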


Before turning to the proof of this proposition, let us first explain the sense in which it encapsulates the modern approach to the Hasimoto transform via parallel frames. As $P_n\in SO(3)$, its columns form a positively oriented orthonormal basis for $\R^3$.  By \eqref{1:48}, the third column coincides with $\vec S_n$, which in the context of the original Hasimoto transform means that it is tangent to the vortex curve.  The remaining two columns form an othonormal basis normal to the curve.

In the continuum setting, one asks that the derivatives of these normal vectors along the curve be parallel to the tangent to the curve, that is, they are given by parallel transport.  Equivalently, the frame $P:\R\to SO(3)$ obeys
\begin{equation}\label{cont parallel}
\partial_x P = A P \qtq{where} A =\begin{bmatrix} 0 & 0 & \kappa_1(x)\\ 0 & 0 & \kappa_2(x)\\ -\kappa_1(x) & -\kappa_2(x) & 0 \end{bmatrix}
\end{equation}
and $\kappa_1,\kappa_2$ are functions (dictated by the geometry of the curve) that ensure $P(x)\vec e_3$ remains tangent to the curve.  It is not difficult to verify that the modulus of $\kappa_1+i\kappa_2$ coincides with the curvature of the curve, while the derivative of its argument is the torsion of the curve; see \cite{Bishop,Peter} for details.  Comparing with \eqref{Hasi}, we see that $\kappa_1(x)+i\kappa_2(x) = \bar\psi(x)$ modulo a global phase rotation.

Let us now compare the continuum setup with that of Proposition~\ref{P:Bishop Hasimoto}.  First we see that the distribution of non-zero entries in $A$ matches that in $q(z)$ given above; moreover, matching the non-zero entries in $A$ to those in $q(\alpha_n)$ leads via \eqref{E:D:alpha} to the relation $\kappa_1+i\kappa_2 = \theta_n e^{i\Gamma(n)}$, which matches the continuum analogue.  This further explains the appearance of the tangent function in \eqref{E:D:alpha}.

\begin{proof}[Proof of Proposition~\ref{P:Bishop Hasimoto}]
The key observation is that
$$
z\mapsto Q(z) \vec{e_3}=\frac{1}{1+|z|^2}\begin{bmatrix}2\Re(z)\\[1ex]-2\Im(z)\\[1ex]1-|z|^2 \end{bmatrix}
$$
maps $\C$ bijectively onto $\mathbb{S}^2\setminus\{-\vec e_3\}$; indeed it is essentially the inverse of the stereographic projection.  As $\vec S_0 \cdot\vec S_1\neq -1$, it follows that there exists a unique $\alpha_0\in \C$ such that
$$
P_0^T\vec S_1 =Q(\alpha_0) \vec e_3 \qtq{or equivalently,} \vec S_1 = P_0 Q(\alpha_0) \vec{e_3}.
$$
Using this observation and arguing inductively, one easily constructs uniquely the remaining $\alpha_n$ such that \eqref{1:48} holds.  It remains to verify \eqref{FB consistent1} and \eqref{FB consistent2}.

Using that $P_n$ is an orthogonal matrix, we get
$$
\vec S_n \cdot \vec S_{n+1} = P_n \vec{e_3}\cdot P_nQ_n \vec{e_3}=\vec{e_3}\cdot Q_n \vec{e_3} =\frac{1-|\alpha_n|^2}{1+|\alpha_n|^2},
$$
which is \eqref{FB consistent1}.

To continue, we use the fact that for any matrix $\mathcal O\in SO(3)$ and any vector $\vec v$,
\begin{equation}\label{trick}
\begin{aligned}
\bigl(\mathcal O \vec e_3 \bigr)\times \vec v = \mathcal O \begin{bmatrix}  0 & -1 & 0\\[1ex] 1 & 0 & 0\\[1ex] 0 & 0 & 0
\end{bmatrix} \mathcal O^T \vec v.
\end{aligned}
\end{equation}
This allows us to compute
\begin{equation*}
\begin{aligned}
\vec S_n \times \vec S_{n+1}
&= P_n \begin{bmatrix}  0 & -1 & 0\\[1ex] 1 & 0 & 0\\[1ex] 0 & 0 & 0\end{bmatrix} P_n^T P_n Q_n \vec{e_3}= \frac1{1+|\alpha_n|^2}P_n \begin{bmatrix}  2\Im(\alpha_n)\\[1ex] 2\Re(\alpha_n)\\[1ex] 0\end{bmatrix}.
\end{aligned}
\end{equation*}
Thus,
\begin{align}\label{1:56}
\vec S_{n-1} \cdot (\vec S_n \times \vec S_{n+1} )
&= P_nQ_{n-1}^T\vec{e_3} \cdot\frac1{1+|\alpha_n|^2}P_n \begin{bmatrix}  2\Im(\alpha_n)\\[1ex] 2\Re(\alpha_n)\\[1ex] 0\end{bmatrix} \notag\\
&= \frac{4\Im[\bar{\alpha}_n\alpha_{n-1}]}{(1+|\alpha_n|^2)(1+|\alpha_{n-1}|^2)}.
\end{align}
Using also \eqref{FB consistent1}, we get
\begin{align}\label{1:57}
(\vec S_{n-1} \times \vec S_{n} )\cdot (\vec S_n \times \vec S_{n+1} )&= (\vec S_{n-1}\cdot \vec S_n) (\vec S_n \cdot \vec S_{n+1} ) - \vec S_{n-1} \cdot \vec S_{n+1}\notag\\
&=(\vec S_{n-1}\cdot \vec S_n) (\vec S_n \cdot \vec S_{n+1} ) -P_{n-1} \vec{e_3}\cdot P_{n-1}Q_{n-1}Q_n \vec{e_3}\notag\\
&=\frac{(1-|\alpha_{n-1}|^2)(1-|\alpha_n|^2)}{(1+|\alpha_{n-1}|^2)(1+|\alpha_n|^2)} -Q_{n-1}^T \vec{e_3}\cdot Q_n \vec{e_3}\notag\\
&= \frac{4\Re[\bar{\alpha}_n\alpha_{n-1}]}{(1+|\alpha_n|^2)(1+|\alpha_{n-1}|^2)}.
\end{align}
Collecting \eqref{1:56} and \eqref{1:57}, we obtain \eqref{FB consistent2}.
\end{proof}

As announced earlier, the main goal of this section is to `invert' the discrete Hasimoto transform.  To this end, let $\alpha:\Z\times\R\to \C$ be a solution to the Ablowitz--Ladik system \eqref{E:AL}.  For $n\in \Z$, we define
\begin{align}\label{Qn}
Q_n(t)= Q(\alpha_n(t))
\end{align}
and
\begin{equation}\label{An}
\begin{aligned}
A_n(t)
&= \begin{bmatrix} 0 & -2\Re(\bar{\alpha}_n\alpha_{n-1}) & -2\Im (\alpha_n-\alpha_{n-1})\\[1ex]
            2\Re(\bar{\alpha}_n\alpha_{n-1})& 0 & -2\Re (\alpha_n-\alpha_{n-1})\\[1ex]
            2\Im (\alpha_n-\alpha_{n-1}) & 2\Re (\alpha_n-\alpha_{n-1}) & 0\end{bmatrix}.
\end{aligned}
\end{equation}

Fix $\mathcal O\in \SO(3)$ and let $P_0(t)$ be the solution to the initial-value problem
\begin{equation}\label{P0}
\frac{d}{dt} P_0 = P_0 A_0 \qtq{with} P_0(t=0)=\mathcal O.
\end{equation}
For all other $n\in \Z\setminus\{0\}$, we define $P_n(t)$ via the recurrence relation
\begin{align}\label{Pn}
P_{n+1}(t)= P_n(t) Q_n(t).
\end{align}

\begin{lemma}\label{L:QP}
Assume $\alpha:\Z\times\R\to \C$ is a solution to the Ablowitz--Ladik system \eqref{E:AL}.  Let $\{Q_n\}_{n\in \Z}$ and $\{P_n\}_{n\in \Z}$ be as defined by \eqref{Qn} through \eqref{Pn}.  Then for all $n\in\Z$, we have
\begin{align}
\frac{d}{dt} Q_n &= Q_n A_{n+1} - A_n Q_n \label{deriv Q}\\
\frac{d}{dt} P_n &= P_n  A_n \label{deriv P}.
\end{align}
\end{lemma}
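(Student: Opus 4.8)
The plan is to reduce the whole statement to the single matrix identity \eqref{deriv Q} and then verify that identity by direct computation. First I would observe that \eqref{deriv P} follows from \eqref{deriv Q} by induction propagating outward from the base point $n=0$, where \eqref{deriv P} holds by the very definition \eqref{P0} of $P_0$. Indeed, differentiating the recurrence \eqref{Pn} gives $\frac{d}{dt}P_{n+1} = \bigl(\frac{d}{dt}P_n\bigr)Q_n + P_n\frac{d}{dt}Q_n$. If $\frac{d}{dt}P_n = P_nA_n$ and \eqref{deriv Q} holds, then
$$
\tfrac{d}{dt}P_{n+1} = P_nA_nQ_n + P_n\bigl(Q_nA_{n+1}-A_nQ_n\bigr) = P_nQ_nA_{n+1} = P_{n+1}A_{n+1},
$$
which advances the induction forward in $n$; since $Q_n$ is invertible (being in $\SO(3)$), the same relation solved for $\frac{d}{dt}P_n$ in terms of $\frac{d}{dt}P_{n+1}$ advances it to negative $n$ as well. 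Thus \eqref{deriv P} holds for all $n\in\Z$ once \eqref{deriv Q} is known.

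It therefore remains to prove \eqref{deriv Q}. The key point is that $Q_n=Q(\alpha_n)$ depends on time only through $\alpha_n$, so by the chain rule $\frac{d}{dt}Q_n$ is a linear function of $\frac{d}{dt}\alpha_n$, and the latter is supplied explicitly by the Ablowitz--Ladik equation \eqref{E:AL}, namely $\frac{d}{dt}\alpha_n = i(1+|\alpha_n|^2)(\alpha_{n+1}+\alpha_{n-1}) - 2i\alpha_n$. Substituting this into the derivative of the explicit matrix \eqref{Q} expresses $\frac{d}{dt}Q_n$ as a matrix-valued expression in $\alpha_{n-1},\alpha_n,\alpha_{n+1}$ and their conjugates. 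On the other side, $A_{n+1}$ depends on $(\alpha_n,\alpha_{n+1})$ and $A_n$ on $(\alpha_{n-1},\alpha_n)$ via \eqref{An}, while $Q_n$ depends on $\alpha_n$, so $Q_nA_{n+1}-A_nQ_n$ is likewise an explicit expression in exactly the same three sites. Hence \eqref{deriv Q} becomes a time-independent algebraic identity in the six real variables $\Re\alpha_{n-1},\Im\alpha_{n-1},\Re\alpha_n,\Im\alpha_n,\Re\alpha_{n+1},\Im\alpha_{n+1}$, to be checked entry by entry.

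To keep the bookkeeping manageable, I would exploit the fact that $Q_n\in\SO(3)$ forces both sides to be $Q_n$ times an antisymmetric matrix: multiplying \eqref{deriv Q} on the left by $Q_n^T$ turns it into
$$
Q_n^T\tfrac{d}{dt}Q_n = A_{n+1} - Q_n^TA_nQ_n,
$$
whose two sides are manifestly antisymmetric, so that only three scalar entries need to be verified. Alternatively, one may lift the computation to the $2\times2$ $SU(2)$ double cover, where $Q(z)$ becomes a unitary and the $A_n$ become traceless anti-Hermitian matrices, cutting the algebra down further.

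The main obstacle is precisely this verification: there is no conceptual subtlety, but the entries of \eqref{Q} involve $\Re(z),\Im(z),\Re(z^2),\Im(z^2)$, and $|z|^2$, so differentiating in time and inserting \eqref{E:AL} produces a sizeable rational expression that must be matched against the matrix product $Q_nA_{n+1}-A_nQ_n$. This is routine but laborious, being exactly the kind of finite polynomial identity that can be confirmed by hand or with a computer algebra system.
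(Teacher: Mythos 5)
Your proposal is correct and follows essentially the same route as the paper: \eqref{deriv Q} is established by a direct (lengthy but routine) computation substituting \eqref{E:AL} into the time derivative of the explicit matrix \eqref{Q}, and \eqref{deriv P} is then obtained by two-sided induction from the base case $n=0$, which is exactly the definition \eqref{P0}. Your observation that conjugating by $Q_n^T$ reduces the verification of \eqref{deriv Q} to three scalar identities is a sensible organizational refinement of the computation that the paper simply omits.
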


\begin{remark}
The identity \eqref{deriv Q} can be interpreted as an $SO(3)$-valued zero-curvature representation of the Ablowitz--Ladik model.  The usual $2\times2$ representation (cf. \cite{AL}) is inferior for our purposes since it leads to a less transparent action of the $SO(3)$ gauge group of the spin chain model.
\end{remark}

\begin{proof}
The claim \eqref{deriv Q} follows from a lengthy computation, using \eqref{E:AL} to compute the time derivative of $Q_n$.  We omit the details.

To prove \eqref{deriv P}, we argue by induction.  For $n=0$, \eqref{deriv P} is precisely the definition of $P_0$.  Assuming \eqref{deriv P} holds for some $n\geq 0$, and using \eqref{Pn} and \eqref{deriv Q}, we compute
\begin{align*}
P_{n+1}^T\tfrac{d}{dt} P_{n+1}
&= Q_n^T P_n^T\Bigl[ \bigl(\tfrac{d}{dt} P_n\bigr)Q_n +P_n\tfrac{d}{dt} Q_n \Bigr]\\
&= Q_n^T P_n^T\bigl[P_n  A_n Q_n + P_n (Q_n A_{n+1} -A_n Q_n)\bigr]\\
&=A_{n+1}.
\end{align*}
Similarly, assuming that \eqref{deriv P} holds for some $n+1\leq 0$, and using \eqref{Pn}, \eqref{deriv Q}, and the fact that the matrices $A_n$ are antisymmetric, we compute
\begin{align*}
P_n^T\tfrac{d}{dt} P_n
&= \bigl(P_{n+1}Q_n^T\bigr)^T \tfrac{d}{dt} \bigl(P_{n+1}Q_n^T\bigr)\\
&=Q_n P_{n+1}^T\Bigl[ \bigl(\tfrac{d}{dt} P_{n+1}\bigr)Q_n^T +P_{n+1}\tfrac{d}{dt} Q_n^T \Bigr]\\
&= Q_n P_{n+1}^T\bigl[P_{n+1}  A_{n+1} Q_n^T + P_{n+1} (A_{n+1}^TQ_n^T - Q_n^TA_n^T)\bigr]\\
&=Q_n (A_{n+1}+ A_{n+1}^T) Q_n^T - A_n^T \\
&= A_n.
\end{align*}
This completes the proof of the lemma.
\end{proof}

\begin{theorem}\label{T:AL to S}
Let $\mathcal O\in \SO(3)$ and let $\alpha:\Z\times\R\to \C$ be a solution to the Ablowitz--Ladik system \eqref{E:AL}.  Let $\{Q_n\}_{n\in \Z}$ and $\{P_n\}_{n\in \Z}$ be as defined by \eqref{Qn} through \eqref{Pn}.  Then $\vec S:\Z\times\R\to \mathbb{S}^2$ given by
\begin{equation}\label{S def}
\begin{aligned}
\vec S_n (t)=P_n(t)\vec{e_3}
\end{aligned}
\end{equation}
is a solution to the system \eqref{E:S}.
\end{theorem}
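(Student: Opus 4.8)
The plan is to differentiate the definition \eqref{S def} directly and match the outcome against RHS\eqref{E:S}. By \eqref{deriv P} of Lemma~\ref{L:QP},
\[
\tfrac{d}{dt}\vec S_n = \tfrac{d}{dt}\bigl(P_n\vec{e_3}\bigr) = P_n A_n \vec{e_3},
\]
and since $A_n\vec{e_3}$ is simply the third column of $A_n$, reading it off from \eqref{An} gives
\[
\tfrac{d}{dt}\vec S_n = P_n \begin{bmatrix} -2\Im(\alpha_n-\alpha_{n-1})\\[0.5ex] -2\Re(\alpha_n-\alpha_{n-1})\\[0.5ex] 0\end{bmatrix}.
\]
Thus the theorem reduces to showing that RHS\eqref{E:S} equals this very vector.

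To evaluate RHS\eqref{E:S}, I compute the two cross-product terms separately and then add them. For the forward neighbour, the computation already carried out in the proof of Proposition~\ref{P:Bishop Hasimoto} gives $\vec S_n\times \vec S_{n+1} = (1+|\alpha_n|^2)^{-1}P_n[\,2\Im(\alpha_n),\,2\Re(\alpha_n),\,0\,]^T$, while \eqref{FB consistent1} yields $1+\vec S_n\cdot\vec S_{n+1}=2(1+|\alpha_n|^2)^{-1}$. Hence
\[
\vec S_n\times \frac{2\vec S_{n+1}}{1+\vec S_n\cdot\vec S_{n+1}} = P_n\begin{bmatrix}2\Im(\alpha_n)\\[0.5ex]2\Re(\alpha_n)\\[0.5ex]0\end{bmatrix}.
\]
For the backward neighbour, I write $\vec S_{n-1}=P_nQ_{n-1}^T\vec{e_3}$ and apply the identity \eqref{trick} with $\mathcal O=P_n$ (so that $\mathcal O\vec{e_3}=\vec S_n$), which gives $\vec S_n\times\vec S_{n-1}=P_n M Q_{n-1}^T\vec{e_3}$, where $M$ is the antisymmetric matrix appearing in \eqref{trick}. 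Expanding $Q_{n-1}^T\vec{e_3}$ via \eqref{Q} and applying $M$ produces $\vec S_n\times\vec S_{n-1}=(1+|\alpha_{n-1}|^2)^{-1}P_n[\,-2\Im(\alpha_{n-1}),\,-2\Re(\alpha_{n-1}),\,0\,]^T$; combined with the one-site shift of \eqref{FB consistent1}, namely $1+\vec S_n\cdot\vec S_{n-1}=2(1+|\alpha_{n-1}|^2)^{-1}$, this gives
\[
\vec S_n\times\frac{2\vec S_{n-1}}{1+\vec S_n\cdot\vec S_{n-1}} = P_n\begin{bmatrix}-2\Im(\alpha_{n-1})\\[0.5ex]-2\Re(\alpha_{n-1})\\[0.5ex]0\end{bmatrix}.
\]

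Adding the two contributions, the factors $1+|\alpha_n|^2$ and $1+|\alpha_{n-1}|^2$ exactly cancel the respective denominators, leaving
\[
\vec S_n\times\Bigl(\frac{2\vec S_{n+1}}{1+\vec S_n\cdot\vec S_{n+1}}+\frac{2\vec S_{n-1}}{1+\vec S_n\cdot\vec S_{n-1}}\Bigr) = P_n\begin{bmatrix}2\Im(\alpha_n-\alpha_{n-1})\\[0.5ex]2\Re(\alpha_n-\alpha_{n-1})\\[0.5ex]0\end{bmatrix}.
\]
Negating this reproduces precisely the expression for $\tfrac{d}{dt}\vec S_n$ found in the first step, which establishes \eqref{E:S}. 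I expect the only delicate point to be the backward-neighbour term: one must express $\vec S_{n-1}$ in the frame $P_n$ rather than $P_{n-1}$ so that the orthogonal factors telescope, and track the appearance of $Q_{n-1}^T$ (not $Q_{n-1}$) when applying \eqref{trick}; the forward term and the evaluation of both denominators are immediate from \eqref{FB consistent1} and the computation already recorded in this section.
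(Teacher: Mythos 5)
Your proposal is correct and follows essentially the same route as the paper: differentiate $\vec S_n = P_n\vec e_3$ via \eqref{deriv P}, compute $1+\vec S_n\cdot\vec S_{n\pm1}$ and the two cross products using \eqref{trick} together with the telescoping $\vec S_{n\pm1}=P_nQ_n\vec e_3$, $P_nQ_{n-1}^T\vec e_3$, and match against $P_nA_n\vec e_3$. The only (immaterial) difference is in the backward term, where you evaluate $MQ_{n-1}^T\vec e_3$ directly from \eqref{Q}, whereas the paper shifts the forward formula and invokes the fact that $\begin{bmatrix}2\Im(\alpha_{n-1}) & 2\Re(\alpha_{n-1}) & 0\end{bmatrix}^T$ is fixed by $Q_{n-1}$; both yield the same cancellation.
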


\begin{proof}
On one hand, using Lemma~\ref{L:QP}, we get
\begin{equation}\label{2:37}
\begin{aligned}
\frac{d}{dt}\vec S_n = \frac{d}{dt}P_n\vec{e_3} = P_n A_n \vec{e_3} = P_n \begin{bmatrix}  -2\Im(\alpha_n -\alpha_{n-1})\\[1ex] -2\Re(\alpha_n -\alpha_{n-1})\\[1ex] 0
\end{bmatrix}.
\end{aligned}
\end{equation}

On the other hand, using \eqref{Pn} we compute
\begin{equation*}
\begin{aligned}
1+\vec S_n \cdot \vec S_{n+1}
= 1+ P_n\vec{e_3}\cdot P_nQ_n\vec{e_3}
= 1+\frac{1-|\alpha_n|^2}{1+|\alpha_n|^2}= \frac2{1+|\alpha_n|^2}.
\end{aligned}
\end{equation*}
Using also \eqref{trick}, we find
\begin{equation*}
\begin{aligned}
\vec S_n \times \vec S_{n+1}
&= P_n \begin{bmatrix}  0 & -1 & 0\\[1ex] 1 & 0 & 0\\[1ex] 0 & 0 & 0\end{bmatrix} P_n^T P_n Q_n \vec{e_3}
= \frac1{1+|\alpha_n|^2}P_n \begin{bmatrix}  2\Im(\alpha_n)\\[1ex] 2\Re(\alpha_n)\\[1ex] 0\end{bmatrix}.
\end{aligned}
\end{equation*}
Thus,
\begin{equation*}
\begin{aligned}
-  \frac{2\vec S_n\times\vec S_{n+1}}{1+\vec S_n\cdot\vec S_{n+1}} - \frac{2\vec S_n\times\vec S_{n-1}}{1+\vec S_n\cdot\vec S_{n-1}}
&= P_n\begin{bmatrix}  -2\Im(\alpha_n)\\[1ex] -2\Re(\alpha_n)\\[1ex] 0\end{bmatrix} +P_n Q_{n-1}^T\begin{bmatrix}  2\Im(\alpha_{n-1})\\[1ex] 2\Re(\alpha_{n-1})\\[1ex] 0\end{bmatrix}.
\end{aligned}
\end{equation*}

It is easy to verify that for each $n\in \Z$, the vector $\begin{bmatrix}  2\Im(\alpha_{n}) & 2\Re(\alpha_{n}) & 0\end{bmatrix}^T$ is an eigenvector for $Q_{n}$ with eigenvalue $1$.  Indeed, this vector belongs to the kernel of $q(\alpha_n)$, where $q$ is the antisymmetric matrix defined in \eqref{q}.  Thus,
\begin{equation*}
\begin{aligned}
-  \frac{2\vec S_n\times\vec S_{n+1}}{1+\vec S_n\cdot\vec S_{n+1}} - \frac{2\vec S_n\times\vec S_{n-1}}{1+\vec S_n\cdot\vec S_{n-1}}
=P_n \begin{bmatrix}  -2\Im(\alpha_n -\alpha_{n-1})\\[1ex] -2\Re(\alpha_n -\alpha_{n-1})\\[1ex] 0\end{bmatrix},
\end{aligned}
\end{equation*}
which combined with \eqref{2:37} yields the claim.
\end{proof}

\section{Invariance of white noise for Ablowitz--Ladik}\label{S:AL GWP}

\begin{definition}
We say that a global solution $\alpha:\Z\times\R\to\C$ to the Ablowitz--Ladik system \eqref{E:AL} is a \emph{good solution} if it satisfies the following two conditions:
\begin{align}
\int_{-T}^T \sum_{n\in \Z} \langle n\rangle^{-q} |\alpha_n(t)|^{2p}\,dt <\infty \quad\text{for some $p>q>1$ and all $T>0$},\label{hyp1}\\
\sup_{|t|\leq T} \sum_{n\in \Z} e^{-c\langle n\rangle} |\alpha_n(t)|^2<\infty \quad\text{for some $c>0$ and all $T>0$}\label{hyp2}.
\end{align}
\end{definition}

\begin{remark}\label{R:GS}
If $\alpha(t)=\{\alpha_n(t)\}_{n\in\Z}$ is a good solution to \eqref{E:AL}, then so is
$$
\{e^{i\phi}\alpha_{n+m}(t+t_0)\}_{n\in \Z}
$$
for any $m\in\Z$, $\phi\in[0,2\pi)$, and $t_0\in\R$.  Indeed, one may use the same parameters $p$, $q$, and $c$ appearing in \eqref{hyp1} and \eqref{hyp2}, respectively.
\end{remark}

\begin{theorem}[Almost sure global existence and uniqueness for Ablowitz--Ladik]\label{T:AL GWP}\leavevmode\\
Fix $\beta>0$.  Then for almost every initial data $\alpha(0)=\{\alpha_n(0)\}_{n\in \Z}$ chosen according to the white noise measure
\begin{equation}\label{E:D:wn}
d\mu_{wn}^\beta=\prod_{n\in \Z}\frac{1+2\beta}{\pi} \frac {d\text{\rm Area}(\alpha_n)}{(1+|\alpha_n|^2)^{2+2\beta}}
\end{equation}
there exists a unique global good solution $\alpha:\Z\times\R\to \C$ to the Ablowitz--Ladik system \eqref{E:AL}.
\end{theorem}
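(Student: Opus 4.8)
The plan is to construct the solution as a limit, locally uniform in space and time, of the solutions $\alpha^K$ to the spatially truncated systems \eqref{AL free}, following the strategy of Bourgain \cite{Bourg:IMNLS}. For each $K$ I would sample the initial data from $d\mu_{wn}^\beta$ and evolve the restriction to $\{|n|\le K\}$ by \eqref{AL free}; by Proposition~\ref{P:trunc} this flow is global and preserves $d\mu_{wn}^{\beta,K}$. Since $d\mu_{wn}^\beta$ is a product measure, its restriction to $\{|n|\le K\}$ is exactly $d\mu_{wn}^{\beta,K}$, so the law of $\alpha^K(t)$ on $\{|n|\le K\}$ coincides with the initial law for every $t$. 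This stationarity is the engine of the argument: it freezes every one-point moment in time.

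First I would record the a priori bounds that it produces. The one-point density in \eqref{E:D:wn} has heavy tails, so $\E|\alpha_n(0)|^{r}<\infty$ precisely for $r$ below a $\beta$-dependent threshold; crucially, for $\beta>0$ one may choose exponents $p>q>1$ with $2p$ below this threshold, so that $\E|\alpha_n(0)|^{2p}<\infty$. By stationarity of the truncated law and Tonelli,
$$
\E\int_{-T}^T\sum_{|n|\le K}\langle n\rangle^{-q}|\alpha_n^K(t)|^{2p}\,dt = 2T\,\E|\alpha_0(0)|^{2p}\sum_{|n|\le K}\langle n\rangle^{-q}\le C(T,p,q)
$$
uniformly in $K$, the spatial sum converging because $q>1$. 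Thus the quantity controlling \eqref{hyp1} is bounded in $K$ in expectation, and hence almost surely finite for the eventual limit by lower semicontinuity.

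The technical heart is to upgrade these stochastic bounds to almost sure local convergence of $\{\alpha^K\}$. I would fix a spatial window and a horizon $[-T,T]$ and estimate the weighted difference $D_{K,K'}(t)=\sum_n e^{-c\langle n\rangle}|\alpha_n^{K'}(t)-\alpha_n^{K}(t)|^2$ for $K'>K$. Differentiating and summing by parts, the nearest-neighbour structure of \eqref{E:AL} produces a source supported near $|n|=K$, where the two truncations first disagree, together with a bulk term of the schematic form $\sum_n e^{-c\langle n\rangle}(1+|\alpha_n^{K'}|^2+|\alpha_n^{K}|^2)|\alpha_n^{K'}-\alpha_n^{K}|^2$. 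The exponential weight absorbs the $e^{c}$ loss coming from nearest-neighbour spreading, while the dangerous factors $1+|\alpha_n|^2$—only barely integrable because of the long tails—are controlled not in $\ell^\infty_n$ but through the space-time bound \eqref{hyp1}: one applies H\"older in $n$ against the weight $\langle n\rangle^{-q}$ and uses finiteness of $\int_{-T}^T\sum_n\langle n\rangle^{-q}|\alpha_n|^{2p}\,dt$ to run a Gronwall argument on short (dyadic) time subintervals. Sending the boundary source to infinity with $K$ then shows $\{\alpha^K\}$ is Cauchy on each window, producing an almost sure limit $\alpha$ solving the full system \eqref{E:AL}. A parallel differential inequality for $\sum_n e^{-c\langle n\rangle}|\alpha_n^K(t)|^2$, integrated against the same space-time bound, gives a bound uniform in $K$ and hence, in the limit, the condition \eqref{hyp2}.

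Finally, uniqueness within the class of good solutions follows from exactly the weighted energy estimate used for convergence: if $\alpha$ and $\tilde\alpha$ are good solutions with the same data, then $\sum_n e^{-c\langle n\rangle}|\alpha_n-\tilde\alpha_n|^2$ obeys a Gronwall inequality whose coefficient is integrable in time precisely by virtue of \eqref{hyp1}, forcing the difference to vanish. Here the two defining conditions of a good solution reveal their purpose: \eqref{hyp2} guarantees that the weighted energy is finite and may be differentiated, while \eqref{hyp1} supplies the integrable-in-time control of $1+|\alpha_n|^2$ needed to close the estimate. I expect the main obstacle throughout to be the absence of finite propagation speed together with coefficients $1+|\alpha_n|^2$ that fail to be bounded, or even square-summable in $n$, because of the heavy tails of \eqref{E:D:wn}; the delicate point is the narrow admissible window $1<q<p$ forced by the moment threshold (which collapses as $\beta\downarrow0$), and replacing the missing $\ell^\infty$ control by the space-time integral \eqref{hyp1} is the crux of the implementation.
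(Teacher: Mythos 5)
Your proposal is correct and follows essentially the same route as the paper: finite-volume truncation with the invariant measure of Proposition~\ref{P:trunc}, a weighted-in-space $\ell^2$ Gronwall estimate for differences of consecutive truncations whose smallness comes from the exponential weight near $|n|\sim K$, measure invariance plus Fatou to obtain \eqref{hyp1}, and uniqueness via the same weighted energy argument in which \eqref{hyp2} controls the tail and \eqref{hyp1} (through the window $1<q<p$, so that the exponent $q/p<1$) replaces the missing $\ell^\infty_n$ control. The only cosmetic difference is that you control the Gronwall coefficient by H\"older in $n$ against the $\langle n\rangle^{-q}$ weight throughout, whereas the paper uses an interpolated bound $\E\sup_{|n|\le L}|\alpha_n|^2\lesssim L^{1/(1+\eps)}$ for the convergence step and reserves the Chebyshev-against-the-weight device for the uniqueness step; both close the same way.
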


\begin{proof}
We begin by constructing global solutions to \eqref{E:AL} for almost every initial data chosen according to the measure $d\mu_{wn}^\beta$.  We will do so by proving that increasingly large finite-volume solutions to the Ablowitz--Ladik system \eqref{AL free} converge to a solution to \eqref{E:AL}, uniformly on compact regions of spacetime.

Let $\alpha(0)=\{\alpha_n(0)\}_{n\in \Z}$ be chosen according to the measure $d\mu_{wn}^\beta$.  For $4\leq K\in 2^{\Z}$, let $\alpha^K:\{-K,\ldots,K\}\times\R\to\C$ denote the unique global solution to \eqref{AL free} with initial data $\alpha^K(0)=\{\alpha_n(0)\}_{|n|\leq K}$ constructed in Proposition~\ref{P:trunc}.

We will show that almost surely, the global solutions $\alpha^K$ converge uniformly on compact regions of spacetime as $K\to \infty$.  To this end, we fix $T>0$ and for each $|t|\leq T$ and $4\leq K\in 2^\Z$, we define
$$
M_K(t)= \sum_{n\in \Z} e^{-4\langle n\rangle} \bigl| \alpha_n^{2K}(t) -\alpha_n^{K}(t)\bigr|^2,
$$
with the convention that $\alpha_n^L\equiv 0$ for $|n|>L$.  Straightforward computations give
\begin{align*}
&\frac{d}{dt} M_K(t) \\
&= -2\Im \sum_{n\in \Z} e^{-4\langle n\rangle}\bigl(\overline{\alpha_n^{2K}} -\overline{\alpha_n^{K}}\bigr)\Bigl\{ (1+|\alpha_n^{2K}|^2)\Bigl[\bigl(\alpha_{n+1}^{2K} -\alpha_{n+1}^{K}\bigr) +\bigl(\alpha_{n-1}^{2K} -\alpha_{n-1}^{K}\bigr)\Bigr]\\
&\qquad\qquad\qquad\qquad\qquad+\bigl(\alpha_{n+1}^{K} +\alpha_{n-1}^{K}\bigr) \Bigl[ \overline{\alpha_n^{2K}} \bigl(\alpha_n^{2K} -\alpha_n^{K}\bigr) +\alpha_n^K \bigl(\overline{\alpha_n^{2K}} -\overline{\alpha_n^{K}}\bigr) \Bigr]\Bigr\}.
\end{align*}
Using Cauchy--Schwarz and the fact that $2+e^{4\langle n\rangle-4\langle n-1\rangle}+e^{4\langle n\rangle-4\langle n+1\rangle}\leq 100$ uniformly for $n\in \Z$, we get
\begin{align*}
\Bigl|\frac{d}{dt} M_K(t)\Bigr|
&\leq 100\bigl[1+\sup_n |\alpha_n^{2K}(t)|^2\bigr] M_K(t) + \bigl[6\sup_n |\alpha_n^{K}(t)|^2 +2\sup_n |\alpha_n^{2K}(t)|^2\bigr]M_K(t) \\
&\leq A(t) M_K(t),
\end{align*}
where
$$
A(t)=100 + 102 \sup_n |\alpha_n^{2K}(t)|^2+ 6 \sup_n |\alpha_n^{K}(t)|^2.
$$
Therefore, by Gronwall,
\begin{align}\label{Gronwall}
\sup_{|t|\leq T} M_K(t)\leq M_K(0) \exp\Bigl(\int_{-T}^T A(t)\, dt\Bigr).
\end{align}

To continue, we compute
\begin{align*}
\E M_K(0) =\E\sum_{n\in \Z} e^{-4\langle n\rangle} \bigl| \alpha_n^{2K}(0) -\alpha_n^{K}(0)\bigr|^2= \E \sum_{K<|n|\leq 2K} e^{-4\langle n\rangle} |\alpha_n(0)|^2 \lesssim_\beta e^{-4K}
\end{align*}
and so
\begin{align}\label{M0 large}
\Prob (M_K(0)\geq e^{-2K})\lesssim_\beta e^{-2K}.
\end{align}

Using invariance of the measure for the finite-dimensional system \eqref{AL free}, we find
\begin{align*}
\E(\sup_n |\alpha_n^L(t)|^2)=\E(\sup_n |\alpha_n^L(0)|^2)&\leq \lambda +\lambda^{-\eps} \E(\sup_n |\alpha_n^L(0)|^{2+2\eps})\\
&\leq \lambda +\lambda^{-\eps} \E \sum_{n\in \Z}|\alpha_n^L(0)|^{2+2\eps} \lesssim_\beta \lambda +\lambda^{-\eps}L,
\end{align*}
provided $\eps<2\beta$.  Optimizing in $\lambda$, we get
\begin{align*}
\E(\sup_n |\alpha_n^L(t)|^2)\lesssim_\beta L^{\frac1{1+\eps}}.
\end{align*}
Thus,
\begin{align}\label{A large}
\Prob\Bigl(\int_{-T}^T A(t)\, dt\geq K\Bigr)\leq K^{-1}\E \int_{-T}^T A(t)\, dt\lesssim_\beta K^{-1}T+ K^{-1}T K^{\frac1{1+\eps}}\lesssim_\beta TK^{-\frac\eps{1+\eps}}.
\end{align}

Combining \eqref{Gronwall} through \eqref{A large}, we obtain
$$
\sup_{|t|\leq T} M_K(t)\lesssim e^{-K}
$$
on a set $\Omega_{T,K}$ satisfying
$$
\Prob(\Omega_{T,K}^c) \lesssim_\beta \langle T\rangle (K^{-\frac\eps{1+\eps}}+ e^{-2K}),
$$
whenever $\eps<2\beta$.

Now let $\Omega_T$ be the set of initial data defined via
$$
\Omega_T=\Bigl\{\alpha(0):\, \sum_{4\leq K\in 2^\Z}\sup_{|t|\leq T}\sqrt{M_K(t)}<\infty\Bigr\} .
$$
By conservation of the Hamiltonian \eqref{H FT free}, for any $K\geq 4$ we have $\sup_{t\in \R} M_K(t)<\infty$.  Thus,
$$
\Omega_T=\bigcup_{K_0\geq 4}\Bigl\{\alpha(0):\, \sum_{K_0\leq K\in 2^\Z}\sup_{|t|\leq T}\sqrt{M_K(t)}<\infty\Bigr\}  \supseteq\bigcup_{K_0\geq 4}\bigcap_{K\geq K_0} \Omega_{T,K}.
$$
In particular, for any $\eps<2\beta$,
$$
\Prob(\Omega_T^c)\leq \sum_{K\geq K_0} \Prob(\Omega_{T,K}^c)\lesssim \langle T\rangle (K_0^{-\frac\eps{1+\eps}}+ e^{-2K_0})\to 0 \qtq{as} K_0\to \infty.
$$

Finally, let $T_n$ be a sequence of times diverging to infinity.  Then $\Omega=\bigcap \Omega_{T_n}$ is a set of full measure.  Moreover, for an initial data $\alpha(0)=\{\alpha_n(0)\}_{n\in \Z}\in\Omega$, the unique global solutions $\alpha^K:\Z\times\R\to \C$ to \eqref{AL free} with truncated initial data $\alpha^K(0) = \{\alpha_n(0)\}_{|n|\leq K}$ satisfy
$$
\sum_{4\leq K \in 2^\Z} \ \sup_{|t|\leq T,n\in\Z} e^{-2\langle n\rangle} |\alpha_n^{2K}(t) - \alpha_n^K(t)|< \infty \quad\text{for any $T>0$},
$$
which shows that $\alpha^K$ converge uniformly on compact regions of spacetime.

It follows from this that the pointwise limit $\alpha:\Z\times\R\to\C$ is a global solution to \eqref{E:AL} with initial data $\alpha(0)$.  Furthermore, for any $T>0$ this solution  satisfies
\begin{align*}
\sup_{|t|\leq T, n\in \Z} e^{-4\langle n\rangle} \bigl| \alpha_n(t)\bigr|^2<\infty,
\end{align*}
which yields \eqref{hyp2} in the definition of a good solution (with $c>4$).

Our next goal is to prove that the statistical ensemble of global solutions $\alpha$ to \eqref{E:AL} that we constructed above satisfies
\begin{align}\label{Ehyp1}
\E\int_{-T}^T \sum_{n\in \Z} \langle n\rangle^{-q} |\alpha_n(t)|^{2p}\,dt <\infty
\end{align}
for any $1<q<p<1+2\beta$ and any $T>0$.  In this way, we see that \eqref{E:AL} admits a global good solution for a full measure set of initial data.

Fix $T>0$ and $4\leq K\in 2^\Z$.  By invariance of the measure for the finite-dimensional system \eqref{AL free}, we obtain
\begin{align*}
\E\int_{-T}^T\sum_{n\in \Z} \, \langle n\rangle ^{-q} |\alpha_n^K(t)|^{2p}\,dt
&=\int_{-T}^T\E \sum_{n\in\Z} \,\langle n\rangle ^{-q} |\alpha_n^K(0)|^{2p}\, dt \lesssim_\beta T,
\end{align*}
provided merely $q>1$ and $p<1+2\beta$.  As $\alpha^K$ converge uniformly on compact regions of spacetime to $\alpha$, Fatou's Lemma implies \eqref{Ehyp1}.

Finally, it remains to prove uniqueness in the class of good solutions.  Let $\alpha(t)$ and $\beta(t)$ be two good solutions to \eqref{E:AL} with initial data $\alpha(0)=\beta(0)$.  Assume, towards a contradiction, that the two solutions $\alpha$ and $\beta$ are not equal.  Then, translating in space (cf. Remark~\ref{R:GS}) and reversing time if necessary, we may find $T>0$ so that
\begin{align}\label{different}
\alpha_0(T)\neq \beta_0(T).
\end{align}

As $\alpha$ and $\beta$ verify \eqref{hyp1} and \eqref{hyp2}, there exist $\sigma\in(0,1)$ and positive constants $A_T$, $c$, and $B_T$ such that
\begin{gather}
\int_{-T}^T \; \sup_{|n| \leq 2N} \Bigl[ 1+ |\alpha_n(t)|^{2} + |\beta_n(t)|^{2}\Bigr]\,dt\leq A_T N^{\sigma}\quad\text{uniformly for $N\geq 1$},\label{hyp1'}\\
\sup_{|t|\leq T} \sum_{n\in \Z} e^{-c| n|} \Bigl[ 1+ |\alpha_n(t)|^2 + |\beta_n(t)|^2\Bigr]\leq B_T\label{hyp2'}.
\end{gather}
Indeed, in terms of the parameters appearing in \eqref{hyp1}, we may take
$$
\sigma=\max\bigl\{\tfrac{q_\alpha}{p_\alpha},\tfrac{q_\beta}{p_\beta}\bigr\} \qtq{and} c = \max\{c_\alpha,c_\beta\}.
$$

To continue, for $t\in[-T,T]$ we define
$$
M(t) = \sum_{n\in Z} e^{-3c |n|} \bigl|\alpha_n(t) -\beta_n(t)\bigr|^2.
$$
A straightforward computation yields
\begin{align*}
\bigl|\tfrac{d M}{dt} \bigr| &\leq  \sum_{n\in \Z} e^{-3c|n|} (1+ |\alpha_n|^2)\Bigl[ 2|\alpha_n -\beta_n|^2+ |\alpha_{n+1} -\beta_{n+1}|^2 +|\alpha_{n-1} -\beta_{n-1}|^2\Bigr]\\
&\quad + \sum_{n\in \Z} e^{-3c|n|}\; 2 |\alpha_n -\beta_n|^2 \bigl(|\alpha_n| + |\beta_n|\bigr)\bigl( |\beta_{n+1}| + |\beta_{n-1}|\bigr)\\
&\leq C e^{3c} \sup_{|n|\leq 2N} (1+ |\alpha_n(t)|^2 +|\beta_n(t)|^2) \; M(t) \\
&\quad+ C e^{3c} \sum_{|n|\geq N}e^{-3c|n|} \bigl(1+|\alpha_n(t)|^2 +|\beta_n(t)|^2\bigr)^2,
\end{align*}
for some absolute constant $C$ and  $N\geq 2$.  Now employing \eqref{hyp2'} we obtain
$$
\bigl|\tfrac{d M}{dt} \bigr| \leq C e^{3c} \Bigl\{ \sup_{|n|\leq 2N} (1+ |\alpha_n(t)|^2 +|\beta_n(t)|^2) \, M(t) +  e^{-cN} B_T^2 \Bigr\}
$$
uniformly for $t\in[-T,T]$ and $N\geq2$. By Gronwall and \eqref{hyp1'}, this implies
\begin{align*}
M(T) & \leq C e^{3c} T B_T ^2 \exp\Bigl\{-cN + C e^{3c} A_T N^\sigma \Bigr\}.
\end{align*}
This contradicts \eqref{different}, since the right-hand side above converges to zero as $N\to \infty$, thereby completing the proof of uniqueness.
\end{proof}

\begin{theorem}[Invariance of white noise for Ablowitz--Ladik]\label{T:wn invariance}
Fix $\beta>0$.  Then the white noise measure $d\mu_{wn}^\beta$ is left invariant by the flow of the Ablowitz--Ladik system \eqref{E:AL}.
\end{theorem}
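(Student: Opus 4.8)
The plan is to transfer invariance from the finite-volume systems \eqref{AL free}, where it is already established in Proposition~\ref{P:trunc}, up to the infinite-volume flow \eqref{E:AL} by passing to the limit $K\to\infty$ along the very approximation constructed in the proof of Theorem~\ref{T:AL GWP}. This indirect route is forced upon us by the discussion in the introduction: in infinite volume all conserved quantities are infinite and there is no well-defined phase volume, so one cannot appeal to conservation of \eqref{H FT free} together with Liouville's theorem directly. As a first reduction, I would note that finite-dimensional distributions determine a Borel probability measure on the product space $\C^{\Z}$, so it suffices to prove that for every $M\geq 0$, every bounded continuous $F$ of the coordinates $\{\alpha_n\}_{|n|\leq M}$, and every $t$,
\[
\E\,F\bigl(\{\alpha_n(t)\}_{|n|\leq M}\bigr) = \E\,F\bigl(\{\alpha_n(0)\}_{|n|\leq M}\bigr),
\]
where the expectation is over initial data $\alpha(0)$ sampled from $d\mu_{wn}^\beta$ and $\alpha(t)$ denotes the unique global good solution furnished by Theorem~\ref{T:AL GWP}.

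The structural fact that makes the approximation work is that $d\mu_{wn}^\beta$ in \eqref{E:D:wn} is a \emph{product} measure; hence its marginal on the coordinates $\{|n|\leq K\}$ is precisely the finite-volume measure $d\mu_{wn}^{\beta,K}$ of Proposition~\ref{P:trunc}. Consequently, for any power of two $K\geq M$, letting $\alpha^K(t)$ be the solution of the truncated system \eqref{AL free} with initial data $\{\alpha_n(0)\}_{|n|\leq K}$, the law of $\alpha^K(0)$ is exactly $d\mu_{wn}^{\beta,K}$, and the invariance proved in Proposition~\ref{P:trunc} gives
\[
\E\,F\bigl(\{\alpha^K_n(t)\}_{|n|\leq M}\bigr) = \E\,F\bigl(\{\alpha^K_n(0)\}_{|n|\leq M}\bigr) = \E\,F\bigl(\{\alpha_n(0)\}_{|n|\leq M}\bigr),
\]
the final equality holding because $\alpha^K(0)$ and $\alpha(0)$ agree on $|n|\leq M\leq K$.

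It then remains to send $K\to\infty$ along powers of two. Here I would invoke the convergence already established in the proof of Theorem~\ref{T:AL GWP}: for almost every initial datum the truncated solutions $\alpha^K$ converge uniformly on compact regions of spacetime to $\alpha$, so in particular $\alpha^K_n(t)\to\alpha_n(t)$ almost surely for each fixed $n$ and $t$. Continuity of $F$ then gives $F(\{\alpha^K_n(t)\}_{|n|\leq M})\to F(\{\alpha_n(t)\}_{|n|\leq M})$ almost surely, and since $F$ is bounded, dominated convergence yields convergence of the expectations. Combined with the displayed identity, which is valid for all sufficiently large $K$, this establishes the reduced statement and hence the theorem.

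The genuine difficulty of this result resides entirely in the two inputs I am assuming, namely finite-volume invariance (Proposition~\ref{P:trunc}) and, above all, the spacetime convergence of the finite-volume approximations (Theorem~\ref{T:AL GWP}); once these are in hand, the argument here is measure-theoretic bookkeeping. The only analytic point requiring care is the interchange of the limit in $K$ with the expectation, and this is benign precisely because the test functions $F$ are bounded and continuous while the approximating solutions converge almost surely. What makes the pieces fit together so cleanly is the product structure of $d\mu_{wn}^\beta$, which identifies its finite-dimensional marginals with the finite-volume invariant measures and thereby lets the approximation carry invariance to the infinite-volume limit.
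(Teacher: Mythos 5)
Your proposal is correct and follows essentially the same route as the paper's proof: reduce to bounded continuous functions of finitely many coordinates, use the product structure of $d\mu_{wn}^\beta$ to identify its marginals with the invariant finite-volume measures of Proposition~\ref{P:trunc}, and pass to the limit $K\to\infty$ using the almost sure locally uniform convergence $\alpha^K\to\alpha$ from Theorem~\ref{T:AL GWP} together with dominated convergence.
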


\begin{proof}
Let $\alpha(0)=\{\alpha_n(0)\}_{n\in \Z}$ belong to the full-measure set of initial data for which Theorem~\ref{T:AL GWP} guarantees the existence of a unique global good solution to \eqref{E:AL} and let $\alpha:\Z\times\R\to\C$ denote this solution.  To prove invariance of the white noise measure, it suffices to show that
\begin{align*}
\int F( \alpha(t)) \, d\mu_{wn}^\beta (\{\alpha_n(0)\})=\int F( \alpha(0)) \, d\mu_{wn}^\beta (\{\alpha_n(0)\})
\end{align*}
for all $t\in \R$ and all bounded continuous functions $F$ depending on only finitely many coordinates.

To proceed, we fix such an $F$ and choose $N$ large enough so that $F$ is determined by $\alpha_{-N},\ldots,\alpha_N$. For $K\geq N$, let $\alpha^K$ denote the unique global solution to \eqref{AL free} with data $\alpha^K(0)=\{\alpha_n(0)\}_{|n|\leq K}$; see Proposition~\ref{P:trunc}.  This proposition also shows that the measure
$$
d\mu_{wn}^{\beta,K}(\{\alpha_n(0)\})= \prod_{-K\leq n\leq K}\frac{1+2\beta}{\pi} \frac {d\text{Area}(\alpha_n)}{(1+|\alpha_n|^2)^{2+2\beta}}
$$
is left invariant by this flow.  Thus for any $t\in \R$,
\begin{align*}
\int F( \alpha(0)) \, d\mu_{wn}^\beta (\{\alpha_n(0)\})
&=\int F( \alpha_{-N}(0), \ldots, \alpha_N(0)) \, d\mu_{wn}^\beta (\{\alpha_n(0)\})\\
&=\int F( \alpha_{-N}(0), \ldots, \alpha_N(0)) \, d\mu_{wn}^{\beta,K}(\{\alpha_n(0)\})\\
&=\int F( \alpha_{-N}^K(t), \ldots, \alpha_N^K(t)) \, d\mu_{wn}^{\beta,K}(\{\alpha_n(0)\})\\
&=\int F( \alpha_{-N}^K(t), \ldots, \alpha_N^K(t)) \, d\mu_{wn}^\beta (\{\alpha_n(0)\}).
\end{align*}
As $\alpha^K$ converges to $\alpha$ uniformly on compact regions of spacetime as $K\to \infty$, so
\begin{align*}
\int F( \alpha_{-N}^K(t), \ldots \alpha_N^K(t)) \, d\mu_{wn}^\beta (\{\alpha_n(0)\})\to\int F( \alpha_{-N}(t), \ldots \alpha_N(t)) \, d\mu_{wn}^\beta (\{\alpha_n(0)\})
\end{align*}
as $K\to \infty$.  This completes the proof of the theorem.
\end{proof}

\section{Invariance of the Gibbs measure for the spin model}\label{S:5}

In this section we prove almost sure global existence and uniqueness for the spin chain model \eqref{E:S} with initial data distributed according to the Gibbs measure.  Moreover, we show that the flow of \eqref{E:S} leaves the Gibbs measure invariant.

Our first task is to make sense of the Gibbs measure for \eqref{E:S}. We say that a measure with expectation $\E_\beta$ is \emph{a Gibbs measure} at inverse-temperature $\beta$ for \eqref{E:S} if it satisfies the DLR condition.  This condition takes its name from the work of Dobrushin, \cite{Dob}, and Lanford--Ruelle, \cite{LR}.  In the setting of our model, it says the following: for any bounded and continuous function $f$ and any integers $a\leq b$,
\begin{align}\label{DLR}
\E_\beta\bigl\{ & f\bigl(\vec S_{a}, \cdots, \vec S_{b}\bigl)\bigl| \,\vec S_{a-1}, \vec S_{b+1}\bigr\}\\
&= \frac1{Z_{ab}}\int_{\mathbb{S}^2}\!\cdots\!\int_{\mathbb{S}^2} \!f\bigl( s_{a}, \cdots, s_{b}\bigl)p(\vec S_{a-1}, s_a) p(s_b, \vec S_{b+1}) \prod_{k=a}^{b-1}  p(s_k,s_{k+1}) ds_{a} \cdots  ds_b,\notag
\end{align}
where
\begin{equation}\label{little p}
p(s,\sigma)= \tfrac{1+2\beta}{4\pi} \exp\bigl\{2\beta\log\bigl( 1-\tfrac14|s-\sigma|^2\bigr)\bigr \} = \tfrac{1+2\beta}{4\pi} \bigl(\tfrac{1+s\cdot \sigma}2\bigr)^{2\beta},
\end{equation}
as dictated by \eqref{HS}.  The numerical factor $\tfrac{1+2\beta}{4\pi}$ is included here for later convenience.  It is inconsequential in \eqref{DLR}, because it is canceled by the normalization constant
$$
Z_{a,b} = \int_{\mathbb{S}^2}\!\cdots\!\int_{\mathbb{S}^2} p(\vec S_{a-1}, s_a) p(s_b,\vec S_{b+1})\prod_{k=a}^{b-1} p(s_k,s_{k+1})\,ds_{a}\cdots ds_{b}.
$$
Here and below, integration over the sphere is performed with respect to area measure; hence $\int_{\mathbb S^2}\,ds = 4\pi$.  This is dictated by the symplectic structure underlying Definition~\ref{D:pb}.

\begin{proposition}[Existence and uniqueness of the Gibbs measure]\label{P:Gibbs}
The spin chain model \eqref{E:S} admits a unique Gibbs measure at inverse-temperature $\beta>0$.  Moreover, for any integers  $n\leq m$,
\begin{align}\label{uncond exp}
\E_\beta \Bigl\{&f\bigl( \vec S_n, \ldots, \vec S_{m}\bigl)\Bigr\}= \int_{\mathbb{S}^2} \cdots\int_{\mathbb{S}^2} f\bigl( s_n, \ldots, s_{m}\bigl) \prod_{k=n}^{m-1} p(s_k,s_{k+1})\, ds_n \cdots ds_m,
\end{align}
using the notation \eqref{little p}.  We denote this Gibbs measure by $d\mu_{Gibbs}^\beta$.
\end{proposition}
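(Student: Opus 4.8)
The plan is to recognise \eqref{DLR} as the Dobrushin--Lanford--Ruelle specification of a one-dimensional Markov chain on $\mathbb{S}^2$ whose transition density is the symmetric kernel $p(s,\sigma)$ of \eqref{little p}, and to analyse it through the associated transfer operator. I would define $T$ on $L^2(\mathbb{S}^2,ds)$ by $(Tf)(s)=\int_{\mathbb{S}^2}p(s,\sigma)f(\sigma)\,d\sigma$. Since $p$ is bounded and continuous, $T$ is Hilbert--Schmidt, hence compact; since $p(s,\sigma)=p(\sigma,s)$ depends only on $s\cdot\sigma$, it is self-adjoint. A short computation using rotational invariance (place $s$ at the north pole and substitute $u=\tfrac12(1+\cos\theta)$) shows $\int_{\mathbb{S}^2}p(s,\sigma)\,d\sigma=1$ for every $s$, so $p$ is doubly stochastic, $T\mathbf 1=\mathbf 1$, and the normalized uniform measure $\tfrac1{4\pi}\,ds$ is the reversible stationary law. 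Because $p>0$ off the antipodal diagonal (a null set), Jentzsch's theorem (the Perron--Frobenius theorem for positive integral operators) gives that the eigenvalue $1$ is simple, equals the spectral radius, and the rest of the spectrum lies in a disc of radius $\rho<1$. To turn this into quantitative, uniform mixing I would pass to the two-step kernel $p^{(2)}(s,w)=\int p(s,\sigma)p(\sigma,w)\,d\sigma$, which is strictly positive and continuous on the compact manifold $\mathbb{S}^2\times\mathbb{S}^2$, hence bounded below by some $\delta>0$; this Doeblin minorization yields $\sup_{u,w}\bigl|p^{(k)}(u,w)-\tfrac1{4\pi}\bigr|\le C\rho^{\,k}$ for the $k$-step kernel, uniformly in the endpoints.

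For existence I would take $\mu_{Gibbs}^\beta$ to be the law of the stationary reversible Markov chain $(\vec S_n)_{n\in\Z}$ with transition density $p$ and single-site law $\tfrac1{4\pi}\,ds$, which the Kolmogorov extension theorem produces as a shift-invariant probability measure on $(\mathbb{S}^2)^{\Z}$ (consistency being automatic for a probability kernel). Its finite-dimensional marginals are exactly the product formula \eqref{uncond exp}. The DLR condition \eqref{DLR} is then just the Markov property: conditioned on the configuration outside a block $[a,b]$, the law of $(\vec S_a,\dots,\vec S_b)$ depends only on the neighbours $\vec S_{a-1},\vec S_{b+1}$ and is the Markov bridge with density proportional to $p(\vec S_{a-1},s_a)\prod_{k=a}^{b-1}p(s_k,s_{k+1})\,p(s_b,\vec S_{b+1})$, i.e.\ precisely the right-hand side of \eqref{DLR}. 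Thus $\mu_{Gibbs}^\beta$ is a Gibbs measure.

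For uniqueness, let $\nu$ be any measure satisfying \eqref{DLR}, fix a window $[n,m]$ and a bounded continuous $f$, and choose $a\le n$, $b\ge m$. Applying \eqref{DLR} on $[a,b]$ and then integrating out the spins $s_a,\dots,s_{n-1}$ and $s_{m+1},\dots,s_b$ collapses the two end-segments and expresses the conditional expectation of $f$ as a ratio whose only dependence on the data $\vec S_{a-1}=u$, $\vec S_{b+1}=v$ enters through two multi-step kernels $p^{(n-a+1)}(u,s_n)$ and $p^{(b-m+1)}(s_m,v)$, with both step counts tending to infinity as $a\to-\infty$, $b\to+\infty$. Uniform mixing then replaces both factors by $\tfrac1{4\pi}$ up to an error $O(\rho^{\,n-a}+\rho^{\,b-m})$ that is uniform in $u,v$, so the ratio converges to the stationary value $\tfrac1{4\pi}\int f\prod_{k=n}^{m-1}p(s_k,s_{k+1})\,ds$ divided by its $f\equiv1$ counterpart, namely the right-hand side of \eqref{uncond exp}. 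Since this limit is a constant independent of the boundary spins, taking the $\nu$-expectation leaves it unchanged, whence $\E_\nu f$ is prescribed by \eqref{uncond exp} for every cylinder function $f$. Thus all Gibbs measures share the same finite-dimensional distributions and $\nu=\mu_{Gibbs}^\beta$.

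The main obstacle is this uniqueness step, and within it the need for genuinely uniform (in the boundary spins) exponential decay of the boundary influence. The cleanest way around the one delicate point --- that $p$ degenerates to $0$ when two spins are antipodal --- is to base the quantitative estimate on the strictly positive, hence uniformly bounded below, two-step kernel $p^{(2)}$ rather than on a direct spectral expansion of $T$; the Doeblin bound then delivers both the required uniform mixing and the exclusion of the eigenvalue $-1$ (aperiodicity), so that the Perron--Frobenius spectral gap is a genuine gap. Everything else --- compactness and self-adjointness of $T$, stochasticity of $p$, and the bookkeeping that converts the two end-segments into multi-step kernels --- is routine.
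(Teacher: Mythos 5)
Your proposal is correct and follows essentially the same route as the paper: existence via Kolmogorov extension of the stationary Markov chain with uniform one-point marginal plus direct verification of the DLR condition, and uniqueness by collapsing the boundary data into multi-step kernels and showing these converge uniformly to $\tfrac1{4\pi}$ via a Perron--Frobenius argument for the compact, self-adjoint, positivity-improving transfer operator. The only (inessential) difference is how uniformity of the mixing is secured: you use a Doeblin minorization of the strictly positive two-step kernel, while the paper writes $p_{k+2}(s,\sigma)=\langle p(s,\cdot),P^k p(\cdot,\sigma)\rangle_{L^2(\mathbb S^2)}$ and upgrades operator-norm convergence of $P^k$ to the rank-one projection into uniform convergence of the kernels.
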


\begin{remark}
The law \eqref{uncond exp} shows that the random variables $\{\vec S_n\}$ can also be interpreted as the stationary Markov chain associated to the transition probabilities
$$
\E_\beta\Bigl\{ f\bigl(\vec S_{n+1}\bigr)\bigl|\vec S_n \Bigr\} =\int_{\mathbb{S}^2} f\bigl(s\bigr) p(s,\vec S_n)\, ds.
$$
\end{remark}

\begin{proof}
The formula \eqref{uncond exp} gives a consistent family of marginals.  Thus, by Kolmogorov's extension theorem there exists a unique probability measure with these marginals.  It is easy to verify directly from \eqref{uncond exp} that this probability measure satisfies the DLR condition \eqref{DLR}.  It thus remains to verify that any law $\E_\beta$ satisfying the DLR condition \eqref{DLR} has marginals given by \eqref{uncond exp}.

To continue, we define inductively the kernels $p_k:\mathbb{S}^2\times \mathbb{S}^2\to \R$ via
$$
p_1(s,\sigma)=  p(s,\sigma) \qtq{and} p_{k+1}(s,\sigma)= \int_{\mathbb{S}^2} p_k(s,v) p(v,\sigma) \, dv.
$$
With this notation, \eqref{DLR} implies that for any integers $a<n\leq m<b$,
\begin{align*}
\E_\beta \Bigl\{f\bigl( \vec S_n, \cdots, \vec S_{m}\bigl)\Bigr\}
&=\E_\beta\Bigl\{\E_\beta\bigl\{f\bigl( \vec S_n, \cdots, \vec S_{m}\bigl)\bigl| \, \vec S_a, \vec S_b\bigr\}\Bigr\}\\
&=\E_\beta\Biggl\{\int_{\mathbb{S}^2}\cdots\int_{\mathbb{S}^2} \frac{p_{n-a}(\vec S_a,s_n)p_{b-m}(s_{m},\vec S_b)}{p_{b-a}(\vec S_a,\vec S_b)} f\bigl( s_n, \cdots, s_m\bigl)\\
&\qquad \qquad \qquad \qquad \qquad \qquad\times \prod_{k=n}^{m-1}p_1(s_k,s_{k+1})\,ds_n \cdots ds_m \Biggr\}.
\end{align*}
To obtain \eqref{uncond exp}, it thus suffices to show that
\begin{align}\label{1137}
p_k(s, \sigma) \to \tfrac1{4\pi}\quad\text{uniformly as $k\to \infty$}.
\end{align}

Let $P$ denote the operator with kernel $p_1$; this operator is compact, self-adjoint, and positivity-improving; moreover, the constant functions are eigenvectors with eigenvalue $1$.  Therefore, by the Perron--Frobenius theorem, $P^k$ converges in operator norm to projection onto constant functions as $k\to \infty$.  Writing
$$
p_{k+2}(s,\sigma) = \langle p(s,\cdot), P^k p(\cdot, \sigma)\rangle_{L^2(\mathbb{S}^2)},
$$
this immediately implies \eqref{1137} and so completes the proof of the proposition.
\end{proof}

Now that we have established existence and uniqueness of the Gibbs measure for the spin chain model \eqref{E:S} at inverse temperature $\beta>0$, we wish to prove almost sure global existence and uniqueness of solutions to \eqref{E:S} for data distributed according to this measure.  We will work with the following notion of solution:

\begin{definition}
We say that a global solution $\vec S:\R\times\Z\to \mathbb{S}^2$ to the spin chain model \eqref{E:S} is a \emph{good solution} if it satisfies the following:
\begin{align}
\int_{-T}^T \sum_{n\in \Z} \frac{\langle n\rangle^{-q}}{\bigl[1+\vec S_n(t) \cdot \vec S_{n+1}(t)\bigr]^p}\,dt &<\infty \quad\text{for some $p>q>1$ and all $T>0$,}\label{hyp1S}\\
\sup_{|t|\leq T} \sum_{n\in \Z}  \frac{e^{-c\langle n\rangle}}{1+\vec S_n(t) \cdot \vec S_{n+1}(t)}&<\infty \quad\text{for some $c>0$ and all $T>0$}\label{hyp2S}.
\end{align}
\end{definition}

Note that the property of being a good solution is invariant under rigid rotations (the natural gauge transformations), as well as space and time translations.  In view of the denominators in \eqref{E:S}, it is necessary to avoid consecutive spins being anti-parallel.  The above restriction is a more quantitative version of this that allows us to prove uniqueness and is connected to our notion of good solution to \eqref{E:AL} via the discrete Hasimoto transform.  We do not know if uniqueness holds for completely general classical solutions to \eqref{E:S}.

\begin{proposition}[Uniqueness of good solutions]\label{P:uniq}
Let $\vec S(t)$ and $\vec U(t)$ be global good solutions to \eqref{E:S} with initial data $\vec S(0) = \vec U(0)$.  Then $\vec S(t)=\vec U(t)$ for all $t\in\R$.
\end{proposition}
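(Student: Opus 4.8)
The plan is to argue by contradiction, following closely the template of the uniqueness argument for the Ablowitz--Ladik system in Theorem~\ref{T:AL GWP}. Suppose $\vec S(t)\neq\vec U(t)$. Since the class of good solutions is preserved by rigid rotations and by space and time translations, after such a normalization we may assume there is a $T>0$ with $\vec S_0(T)\neq\vec U_0(T)$. The good-solution hypotheses \eqref{hyp1S} and \eqref{hyp2S} then supply, exactly as \eqref{hyp1} and \eqref{hyp2} did in the Ablowitz--Ladik setting, an exponent $\sigma\in(0,1)$ and constants $A_T,B_T,c$ such that $\int_{-T}^T\sup_{|n|\leq 2N}\bigl[1+(1+\vec S_n\cdot\vec S_{n+1})^{-1}+(1+\vec U_n\cdot\vec U_{n+1})^{-1}\bigr]\,dt\lesssim A_T N^\sigma$, together with a uniform-in-time, exponentially weighted $\ell^1$ bound on the same quantity. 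I would then form the weighted discrepancy $M(t)=\sum_n e^{-3c|n|}|\vec S_n(t)-\vec U_n(t)|^2$ and seek a differential inequality $|\tfrac{dM}{dt}|\leq(\text{coefficient})\,M(t)+(\text{exponentially small tail in }N)$ whose coefficient is the integrable-in-time quantity above; Gronwall together with $\sigma<1$ then forces $M(T)=0$, contradicting $\vec S_0(T)\neq\vec U_0(T)$.

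The entire difficulty is concentrated in estimating $\tfrac{dM}{dt}$. Differentiating and using $\vec S_n\cdot\dot{\vec S}_n=0$ (and likewise for $\vec U$), each summand collapses to $(\vec S_n-\vec U_n)\cdot(\dot{\vec S}_n-\dot{\vec U}_n)=-\tfrac{d}{dt}(\vec S_n\cdot\vec U_n)$; after combining the two contributions attached to each bond one is left with expressions of the form $(\vec U_n-\vec S_n)\cdot\bigl[\tfrac{\vec S_n\times\vec S_{n+1}}{1+\vec S_n\cdot\vec S_{n+1}}-\tfrac{\vec U_n\times\vec U_{n+1}}{1+\vec U_n\cdot\vec U_{n+1}}\bigr]$, plus the analogous $(n-1)$-bond terms. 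The key structural input is that each triple product $\vec U_n\cdot(\vec S_n\times\vec S_{n+1})$ vanishes when $\vec U_n=\vec S_n$, so that what superficially looks linear in the discrepancies is in fact genuinely quadratic; this is what makes a Gronwall estimate conceivable at all.

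The main obstacle is that $\tfrac{\vec S_n\times\vec S_{n+1}}{1+\vec S_n\cdot\vec S_{n+1}}$ is quasilinear and blows up like $(1+\vec S_n\cdot\vec S_{n+1})^{-1/2}$ as consecutive spins approach antiparallel. A naive difference estimate produces a Gronwall coefficient of size $(1+\vec S_n\cdot\vec S_{n+1})^{-1}(1+\vec U_n\cdot\vec U_{n+1})^{-1/2}$ concentrated on a \emph{single} bond, which by Young's inequality is controlled only by $(1+\vec S_n\cdot\vec S_{n+1})^{-p}$ with $p\geq\tfrac32$ --- more than \eqref{hyp1S} guarantees for small $\beta$, where $p$ may be taken only slightly above $1$. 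To close the argument for every $\beta>0$ one must therefore exploit the \emph{semilinear} structure exposed by the discrete Hasimoto transform of Section~\ref{S:AL to S}: writing $\tfrac{\vec S_n\times\vec S_{n+1}}{1+\vec S_n\cdot\vec S_{n+1}}=P_n(\Im\alpha_n,\Re\alpha_n,0)^{T}$ and recalling (Theorem~\ref{T:AL to S}) that the velocity at site $n$ is the \emph{difference} $\alpha_n-\alpha_{n-1}$ of adjacent bond variables, the dangerous factors redistribute across distinct bonds, so that the effective coefficient becomes a product of two single-power factors living at neighbouring bonds; the arithmetic--geometric mean inequality then turns this into a sum of single-denominator terms, matching precisely what \eqref{hyp1S} and \eqref{hyp2S} control, just as the degree-two coefficients matched \eqref{hyp1} and \eqref{hyp2} in the Ablowitz--Ladik proof. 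Equivalently, and most cleanly, one transports the two good spin solutions through the parallel-frame construction (legitimate since the good-solution property rules out antiparallel neighbours, cf.\ Proposition~\ref{P:Bishop Hasimoto}) to good Ablowitz--Ladik solutions sharing the same initial data, and invokes the uniqueness already established in Theorem~\ref{T:AL GWP}. I expect the careful bookkeeping of the frames $P_n$ --- tracking the $SO(3)$ gauge so that the reduction to the semilinear model is exact --- to be the principal technical burden.
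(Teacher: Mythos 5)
Your overall skeleton --- a weighted $\ell^2$ discrepancy $M(t)$, a Gronwall inequality whose coefficient is controlled by \eqref{hyp1S} on $|n|\leq 2N$ and whose inhomogeneity is an exponentially small tail controlled by \eqref{hyp2S}, and the conclusion $M\equiv 0$ from $\sigma<1$ --- is exactly the paper's argument (the paper does not even need your contradiction/translation setup: it simply shows $M(t)=0$ for all $|t|\leq T$ directly from $M(0)=0$). The point where you diverge is the one that matters: you assert that the direct spin-level difference estimate cannot close for small $\beta$ because it would demand $p\geq\tfrac32$ in \eqref{hyp1S}, and you therefore propose to route the singular factors through the discrete Hasimoto transform, or to reduce outright to the Ablowitz--Ladik uniqueness of Theorem~\ref{T:AL GWP}. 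Neither step is needed, and the second one has a genuine gap: the paper only establishes the transform in the direction AL $\to$ spins (Theorem~\ref{T:AL to S}); to invoke AL uniqueness you would first have to prove that a good solution of \eqref{E:S} induces, via the parallel frames of Proposition~\ref{P:Bishop Hasimoto} applied at each time, a \emph{good solution} of \eqref{E:AL} with a controlled gauge evolution --- a forward dynamical Hasimoto transform that is nowhere established and is not a small bookkeeping matter.

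The paper closes the direct estimate with a purely algebraic device that your ``naive'' accounting misses. One rewrites the vector field using $\vec S_n\times\vec S_{n+1}=(\vec S_n+\vec S_{n+1})\times\vec S_{n+1}$ and $|\vec S_n+\vec S_{n+1}|^2=2(1+\vec S_n\cdot\vec S_{n+1})$, so that each term is $\tfrac{\vec S_n+\vec S_{n+1}}{|\vec S_n+\vec S_{n+1}|^2}\times\vec S_{n+1}$, i.e.\ a cross product of a \emph{unit} vector with the inversion of $\vec S_n+\vec S_{n+1}$, whose norm is only the $-\tfrac12$ power of $1+\vec S_n\cdot\vec S_{n+1}$. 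The elementary bound $|\vec a\times\vec b-\vec c\times\vec d|\leq|\vec a-\vec c|\,|\vec b|+|\vec c|\,|\vec b-\vec d|$ combined with the inversion identity $\bigl|\tfrac{v}{|v|^2}-\tfrac{w}{|w|^2}\bigr|=\tfrac{|v-w|}{|v|\,|w|}$ then yields a coefficient $\tfrac{1}{|\vec S_n+\vec S_{n+1}|\,|\vec U_n+\vec U_{n+1}|}$: the two half-powers are distributed over the \emph{two solutions at the same bond}, not over neighbouring bonds as you suggest. The arithmetic--geometric mean inequality converts this into $\tfrac14\bigl[(1+\vec S_n\cdot\vec S_{n+1})^{-1}+(1+\vec U_n\cdot\vec U_{n+1})^{-1}\bigr]$, which is precisely the first-power quantity that \eqref{hyp1S'} and \eqref{hyp2S'} control for every $\beta>0$. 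With this identity in hand your primary route goes through verbatim and the detour through the Ablowitz--Ladik system is unnecessary.
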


\begin{proof}
Fix $T>0$.  As $\vec S$ and $\vec U$ verify \eqref{hyp1S} and \eqref{hyp2S}, there exist $\sigma\in(0,1)$, $c>0$, and positive constants $A_T$ and $B_T$ such that
\begin{align}
\int_{-T}^T \sup_{|n|\leq 2N} \biggl[1+ \frac{1}{1+\vec S_n(t) \cdot \vec S_{n+1}(t)}+ \frac{1}{1+\vec U_n(t) \cdot \vec U_{n+1}(t)}\biggr] \,dt\leq A_T N^\sigma,\label{hyp1S'}
\end{align}
uniformly for $N\geq 2$ and
\begin{align}
\sup_{|t|\leq T} \sum_{n\in \Z} e^{-c| n|} \biggl[1+ \frac{1}{1+\vec S_n(t) \cdot \vec S_{n+1}(t)}+ \frac{1}{1+\vec U_n(t) \cdot \vec U_{n+1}(t)}\biggr]\leq B_T\label{hyp2S'}.
\end{align}

To continue, for $t\in[-T,T]$ we define
$$
M(t) = \sum_{n\in Z} e^{-2c|n|} \bigl|\vec S_n(t) -\vec U_n(t)\bigr|^2,
$$
where $C>0$ denotes a large constant to be chosen later.  A straightforward computation yields

\begin{align*}
\frac{d M}{dt} =-4\sum_{n\in\Z}e^{-2c|n|}  (\vec S_n-\vec U_n) \cdot & \biggl\{ \frac{\vec S_n+\vec S_{n+1}}{|\vec S_n+\vec S_{n+1}|^2}\times \vec S_{n+1}-\frac{\vec U_n+\vec U_{n+1}}{|\vec U_n+\vec U_{n+1}|^2}\times \vec U_{n+1}\\
&+ \frac{\vec S_n+\vec S_{n-1}}{|\vec S_n+\vec S_{n-1}|^2}\times \vec S_{n-1}-\frac{\vec U_n+\vec U_{n-1}}{|\vec U_n+\vec U_{n-1}|^2}\times \vec U_{n-1} \biggr\}.
\end{align*}
Using $|\vec a\times \vec b- \vec c\times \vec d|\leq |\vec a-\vec c| |\vec b| + |\vec c| |\vec b- \vec d|$ followed by the arithmetic--geometric mean inequality, we get
\begin{align*}
\Bigl|\frac{dM}{dt} \Bigr|&\leq 4\sum_{n\in\Z}e^{-2c|n|} |\vec S_n-\vec U_n|  \biggl\{ \frac{|\vec S_n-\vec U_n|+ | \vec S_{n+1}-\vec U_{n+1}|}{|\vec S_n+\vec S_{n+1}||\vec U_n+\vec U_{n+1}|} + \frac{|\vec S_{n+1}-\vec U_{n+1}|}{|\vec U_n+\vec U_{n+1}|}\\
&\qquad\qquad\qquad\qquad\qquad\quad+ \frac{|\vec S_n-\vec U_n|+| \vec S_{n-1}-\vec U_{n-1}|}{|\vec S_n+\vec S_{n-1}||\vec U_n+\vec U_{n-1}|} + \frac{|\vec S_{n-1}-\vec U_{n-1}|}{|\vec U_n+\vec U_{n-1}|}\biggr\}\\
&\leq C e^{2c} \sup_{|n|\leq 2N} \biggl\{ 1 + \frac{1}{1+\vec S_n \cdot \vec S_{n+1}}+ \frac{1}{1+\vec U_n \cdot \vec U_{n+1}} \biggr\} M(t)\\
& \quad+ C e^{2c}  \sum_{|n|\geq N} e^{-2c|n|} \biggl\{ 1 + \frac{1}{1+\vec S_n \cdot \vec S_{n+1}}+ \frac{1}{1+\vec U_n \cdot \vec U_{n+1}} \biggr\}
\end{align*}
for some absolute constant $C$ and any $N\geq2$.  As $M(0)=0$ by assumption, combining Gronwall with \eqref{hyp1S'} and \eqref{hyp2S'} yields
\begin{align*}
\sup_{|t|\leq T} |M(t)| & \leq  C e^{2c} T B_T \exp\bigl\{-cN + C e^{2c} A_T N^\sigma \bigr\} \to 0 \qtq{as} N\to\infty.
\end{align*}
Therefore, $M(t)=0$ for all $|t|\leq T$.  As $T>0$ was arbitrary, this shows that $S(t)=U(t)$ for all $t\in \R$.
\end{proof}

We are now ready to tackle Theorem~\ref{T:S GWP}, whose proof will occupy the remainder of this section.

\begin{proof}[Proof of Theorem~\ref{T:S GWP}]

We first address the existence of global good solutions to \eqref{E:S}, for which we will rely on the results of Sections~\ref{S:AL to S} and \ref{S:AL GWP}.  Specifically, Theorem~\ref{T:AL GWP} guarantees the existence of a full measure set of initial data distributed according to the white noise measure $d\mu_{wn}^\beta$ for which there exist unique global good solutions to \eqref{E:AL}. Let $\alpha(0)$ belong to this full measure set of initial data and let $\alpha:\Z\times\R\to \C$ denote the unique global good solution to \eqref{E:AL} with initial data $\alpha(0)$.  Let $\mathcal O\in \SO(3)$ be an independent random variable distributed according to Haar measure. (This plays the role of a random choice of gauge.)  For $n\in \Z$, we define $Q_n(t)$ and $P_n(t)$ as in \eqref{Qn} through \eqref{Pn}.  By Theorem~\ref{T:AL to S},  $\vec S(t) = \{\vec S_n(t)\}_{n\in\Z}$ defined as in \eqref{S def} is a global solution to \eqref{E:S}.  Moreover, since $\alpha$ verifies \eqref{hyp1} and \eqref{hyp2}, it is easy to check that $\vec S$ verifies \eqref{hyp1S} and \eqref{hyp2S}, and so it is a global good solution to \eqref{E:S}.  Proposition~\ref{P:uniq} shows that this solution is uniquely determined by the initial data.  This is important since (due to gauge invariance) each initial configuration $\vec S(0)$ results from continuum many choices of $\alpha(0)$ and $\mathcal O$.

Next, we have to verify that the initial data $\vec S(0)$ for the solution to \eqref{E:S} constructed above is indeed distributed according to the Gibbs measure $d\mu_{Gibbs}^\beta$.  This is the scope of the next proposition.  In fact, together with Proposition~\ref{P:uniq}, our next result also proves that the Gibbs measure $d\mu_{Gibbs}^\beta$ is left invariant by the flow of \eqref{E:S}, thus completing the proof of Theorem~\ref{T:S GWP}.

\begin{proposition}\label{P:same}
For any $t\in \R$, the sequence $\vec S(t) = \{\vec S_n(t)\}_{n\in\Z}$ is distributed according to the Gibbs measure $d\mu_{Gibbs}^\beta$.
\end{proposition}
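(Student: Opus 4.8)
The plan is to reduce the statement for general $t$ to the case $t=0$, and then to identify the $t=0$ law as the stationary Markov chain furnished by Proposition~\ref{P:Gibbs}. The key observation is that at any fixed time $t$ the spin field is built from the data $\bigl(P_0(t),\{\alpha_n(t)\}_{n\in\Z}\bigr)$ by exactly the same deterministic recipe used at $t=0$: one has $\vec S_n(t)=P_n(t)\vec e_3$ with $P_{n+1}(t)=P_n(t)Q(\alpha_n(t))$, cf.\ \eqref{Qn}--\eqref{S def}, and at $t=0$ this is precisely \eqref{frame update} with $P_0(0)=\mathcal O$. Writing $\vec S(t)=F\bigl(P_0(t),\{\alpha_n(t)\}\bigr)$ for this map, it suffices to prove: (i) the pair $\bigl(P_0(t),\{\alpha_n(t)\}\bigr)$ has the same law as $\bigl(\mathcal O,\{\alpha_n(0)\}\bigr)$, namely $\Haar\otimes d\mu_{wn}^\beta$; and (ii) the push-forward of $\Haar\otimes d\mu_{wn}^\beta$ under $F$ is $d\mu_{Gibbs}^\beta$.

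For (i), recall that $\{\alpha_n(t)\}\sim d\mu_{wn}^\beta$ by Theorem~\ref{T:wn invariance}; it remains to show that $P_0(t)$ is $\Haar$-distributed and independent of $\{\alpha_n(t)\}$. Solving \eqref{P0} gives $P_0(t)=\mathcal O\,V(t)$, where $V(t)\in\SO(3)$ solves the same linear equation with $V(0)=\Id$ and is therefore a deterministic functional of the trajectory $\{\alpha_n(s)\}_{s\in[0,t]}$, hence $\sigma(\alpha(0))$-measurable. Since $\mathcal O$ is, by construction, Haar-distributed and independent of $\alpha(0)$, conditioning on the trajectory and using the right-invariance of Haar measure shows that for any bounded $g$ on $\SO(3)$ one has $\E[g(P_0(t))\mid\alpha(0)]=\int g\,d\Haar$, a constant. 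This yields simultaneously that $P_0(t)\sim\Haar$ and that $P_0(t)$ is independent of $\alpha(0)$, hence of $\{\alpha_n(t)\}$. Together with Theorem~\ref{T:wn invariance} this establishes (i) and reduces everything to the static statement (ii).

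For (ii) I would show that under $\Haar\otimes d\mu_{wn}^\beta$ the sequence $\{\vec S_n\}$ is exactly the stationary Markov chain of Proposition~\ref{P:Gibbs}, i.e.\ it has uniform one-dimensional marginals and transition kernel $p$ from \eqref{little p}; uniqueness in Proposition~\ref{P:Gibbs} then closes the argument. The uniform marginals are immediate from rotation invariance: replacing $\mathcal O$ by $\mathcal R\mathcal O$ (still Haar) replaces each $\vec S_n$ by $\mathcal R\vec S_n$ without changing the law, so the law of each $\vec S_n$ is $\SO(3)$-invariant, hence uniform. For the Markov property, condition on $\mathcal F_n:=\sigma(\mathcal O,\alpha_j:j\le n-1)$, which determines $P_n$ and is independent of $\alpha_n$; then $\vec S_{n+1}=P_n\bigl(Q(\alpha_n)\vec e_3\bigr)$ with $\alpha_n$ distributed according to the single-site factor of $d\mu_{wn}^\beta$. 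The heart of the matter is the push-forward computation: since $z\mapsto Q(z)\vec e_3$ is essentially inverse stereographic projection, with conformal factor $\tfrac{4}{(1+|z|^2)^2}$ relating planar area to spherical area, the law of $Q(\alpha_n)\vec e_3$ on $\mathbb S^2$ has density $\tfrac{1+2\beta}{4\pi}\bigl(\tfrac{1+\vec e_3\cdot\sigma}{2}\bigr)^{2\beta}=p(\vec e_3,\sigma)$ with respect to area, using $\tfrac1{1+|z|^2}=\tfrac{1+\vec e_3\cdot\sigma}2$ and \eqref{FB consistent1}. Applying the rotation $P_n$ and invoking the rotation invariance of both $p$ and the area measure gives the conditional law $p(\vec S_n,\cdot)$, which depends on $\mathcal F_n$ only through $\vec S_n$; iterating over $n\le m$ identifies the joint marginals with \eqref{uncond exp}.

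The main obstacle is conceptual rather than computational: it is the gauge-randomization argument in (i), which is what makes the entire scheme work for a statistical ensemble. A deterministic choice of gauge would make $P_0(t)$ a nontrivial functional of the data and would destroy both the independence from $\{\alpha_n(t)\}$ and the stationarity of the spin law; randomizing $\mathcal O$ and exploiting the bi-invariance of Haar measure is exactly what restores them. The only genuine calculation is the stereographic push-forward, which is short once the conformal factor is recorded.
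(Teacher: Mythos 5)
Your proposal is correct and follows essentially the same route as the paper: Step (i) matches the paper's verification that the joint law $d\Haar\,d\mu_{wn}^\beta$ is preserved (via $P_0(t)=\mathcal O\,\Phi(t)$ with $\Phi(t)$ measurable with respect to $\sigma(\alpha(0))$ and right-invariance of Haar measure), and Step (ii) matches the paper's Lemma~\ref{L:cond}, which computes the same push-forward of the single-site factor under $z\mapsto Q(z)\vec e_3$ and then iterates the conditional-expectation identity to recover the marginals \eqref{uncond exp}. The only cosmetic wrinkle is that your conditioning algebra $\mathcal F_n=\sigma(\mathcal O,\alpha_j:j\le n-1)$ does not determine $P_n$ when $n\le 0$ (the paper instead uses $\sigma(\{P_k\}_{k\le n})$, which works for both signs), but this is a trivial relabeling and not a gap.
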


\begin{proof}
The proof proceeds in two steps: First we verify the invariance of the joint law $ d\Haar\, d\mu_{wn}^\beta$ under the flow given by \eqref{E:AL} and \eqref{P0}.  Then we prove that the measure on the spins induced by $d\Haar\, d\mu_{wn}^\beta$ agrees with $d\mu_{Gibbs}^\beta$.

$\bf{Step \,1.}$ To verify invariance of the joint law $ d\Haar\, d\mu_{wn}^\beta$ under the flow given by \eqref{E:AL} and \eqref{P0}, it suffices to show that for any $N\geq 0$ and any bounded continuous function $F:\SO(3)\times\R^{2N+1}\to \R$ we have
\begin{equation}\label{E:step1}
\begin{aligned}
&\iint F\bigl(P_0(t), \alpha_{-N}(t), \ldots, \alpha_N(t)\bigr)\,d\Haar (P_0(0))\, d\mu_{wn}^\beta(\{\alpha(0)\})\\
={}& \iint F\bigl(P_0(0), \alpha_{-N}(0), \ldots, \alpha_N(0)\bigr)\, d\Haar (P_0(0))\, d\mu_{wn}^\beta(\{\alpha(0)\})
\end{aligned}
\end{equation}
for all $t\in\R$.

To this end, let $\mathcal A$ denote the $\sigma$-algebra generated by the random variables $\alpha_n(0)$.  For a full measure set of initial data, there exists a unique global good solution $\alpha(t)$ to \eqref{E:AL}.  This shows that $\alpha(t)$ is $\mathcal A$-measurable for all $t\in \R$.  Moreover, defining $A_0(t)$ via \eqref{An} and then $\Phi(t)$ by
$$
\frac{d}{dt}\Phi(t) = \Phi(t) A_0(t) \qtq{with} \Phi(0) = \Id,
$$
we see that $\Phi(t)$ is also $\mathcal A$-measurable.  Note that $P_0(0)$ is independent of $\mathcal A$.

Thus, by right-invariance of the Haar measure followed by invariance of the white noise measure under the flow of \eqref{E:AL}, we obtain
\begin{align*}
\text{LHS\eqref{E:step1}} &= \E_\beta\Bigl\{\E_\beta\bigl\{F\bigl(P_0(0)\Phi(t),\alpha_{-N}(t), \ldots, \alpha_N(t)\bigr) \big| \mathcal A\bigr\}\Bigr\}\\
&= \iint F\bigl(\mathcal O, \alpha_{-N}(t), \ldots, \alpha_N(t)\bigr)\,d\Haar (\mathcal O)\, d\mu_{wn}^\beta(\{\alpha(0)\})\\
&= \iint F\bigl(\mathcal O, \alpha_{-N}(0), \ldots, \alpha_N(0)\bigr)\, d\Haar (\mathcal O)\, d\mu_{wn}^\beta(\{\alpha(0)\}) = \text{RHS\eqref{E:step1}}.
\end{align*}
This proves invariance of the joint law $ d\Haar\, d\mu_{wn}^\beta$.

These arguments also yield the law of a single spin:  In view of \eqref{Pn}, for any $n\in \Z$ and $t\in\R$, there is an $\mathcal A$-measurable matrix $\Phi_n(t)\in SO(3)$ so that
$$
\vec S_n(t) = P_0(0) \Phi_n(t) \vec e_3; \qtq{indeed,} \Phi_n(t) =\begin{cases} \Phi(t) Q_0(t) \cdots Q_{n-1}(t) &: n\geq 0 \\ \Phi(t) Q_{-1}(t)^T \cdots Q_{n}(t)^T  & : n\leq 0.\end{cases}
$$
As  $P_0(0)$ is Haar distributed and independent of $\mathcal A$,
\begin{align}\label{2:45}
\E_\beta\bigl\{ g\bigl(\vec S_n(t) \bigr)\bigr\} = \E_\beta\Bigl\{ \E_\beta\bigl\{ g\bigl(P_0(0) \Phi_n(t) \vec e_3 \bigr) \big| \mathcal A\bigr\} \Bigr\} = \tfrac1{4\pi}\int_{\mathbb S^2} g(s)\,ds.
\end{align}

$\bf{Step \,2.}$ To verify that the measure induced by the joint law $ d\Haar\, d\mu_{wn}^\beta$ on the spins $\{ \vec S_n(t)\}_{n\in \Z}$ agrees with the Gibbs measure $d\mu_{Gibbs}^\beta$, it suffices to verify that the induced measure gives the same marginals as \eqref{uncond exp}.

To this end, fix $t\in \R$.  For $k\in\Z$, we let $\mathcal A_k$ denote the $\sigma$-algebra generated by the random variables $\{P_n(t)\}_{n\leq k}$, or equivalently, by $\{P_{k}(t), \{\alpha_n(t)\}_{n\leq k-1}\}$.  Note that $\vec S_l(t)$ is $\mathcal A_k$ measurable if and only if $l\leq k$.

The key observation is the following:

\begin{lemma}\label{L:cond}
For any bounded and continuous function $f$ and any integers $n\leq m$,
\begin{equation}\label{E:L:cond}
\begin{aligned}
\E_\beta\Bigl\{ &f\bigl(\vec S_n(t), \ldots, \vec S_m(t)\bigr) \Big| \mathcal{A}_{m-1} \Bigr\}\\
&= \int_{\mathbb{S}^2} f\bigl(\vec S_n(t), \ldots, \vec S_{m-1}(t), s_{m} \bigr)  p\bigl(\vec S_{m-1}(t),s_{m}\bigr)\, ds_m.
\end{aligned}
\end{equation}
\end{lemma}

\begin{proof}
We use the notation of Section~\ref{S:AL to S}. As $\alpha_{m-1}(t)$ is independent of $\mathcal A_{m-1}$,
\begin{align*}
\E_\beta\bigl\{ g\bigl(Q_{m-1}(t)\vec e_3\bigl)\big|\mathcal A_{m-1}\bigr\}
=\int_0^{2\pi} \int_0^\infty g\Bigl( \frac1{1+r^2}\left[\begin{smallmatrix} 2r\cos(\theta) \\ 2r\sin(\theta) \\ 1-r^2\end{smallmatrix}\right] \Bigr) \frac{(1+2\beta) r\,dr\, d\theta}{\pi (1+r^2)^{2+2\beta}} 
\end{align*}
for any bounded and continuous function $g$. Here we used polar coordinates in the form $\alpha_{m-1}(t) = r e^{-i\theta}$.  Changing variables via $\cos(\phi) = \frac{1-r^2}{1+r^2}$ with $\phi\in[0,\pi)$ yields
\begin{align*}
\E_\beta\bigl\{ g\bigl(Q_{m-1}(t)\vec e_3\bigl)&\big|\mathcal A_{m-1}\bigr\} \\
&=\int_0^{2\pi} \!\int_0^{\pi} \!g\Bigl(\left[\begin{smallmatrix} \sin(\phi) \cos(\theta) \\ \sin(\phi) \sin(\theta) \\ \cos(\phi)\end{smallmatrix}\right] \Bigr) \tfrac{1+2\beta}{4\pi} \Bigl[\frac{1+\cos(\phi)}2\Bigr]^{2\beta} \sin(\phi) \,d\phi\, d\theta\\
&=\int_{\mathbb{S}^2} g (s) \tfrac{1+2\beta}{4\pi} \Bigl[\frac{1+ s\cdot \vec e_3}2\Bigr]^{2\beta} \,ds,
\end{align*}
where we used spherical coordinates to obtain the last equality.  Consequently,
\begin{align*}
\text{LHS\eqref{E:L:cond}} &= \E_\beta\Bigl\{ f\bigl(\vec S_n(t), \ldots,\vec S_{m-1}(t),P_{m-1}(t)Q_{m-1}(t)\vec e_3\bigr) \bigl| \mathcal A_{m-1}\Bigr\} \\
&=\int_{\mathbb{S}^2} f\bigl(\vec S_n(t), \ldots,\vec S_{m-1}(t),s\bigr) \tfrac{1+2\beta}{4\pi} \Bigl[\tfrac{1+s\cdot \vec S_{m-1}(t)}2\Bigr]^{2\beta} \,ds = \text{RHS\eqref{E:L:cond}}.
\end{align*}
This completes the proof of the lemma.
\end{proof}

Applying Lemma~\ref{L:cond} inductively and then \eqref{2:45}, we obtain
\begin{align*}
\E_\beta\Bigl\{& f\bigl(\vec S_n(t), \ldots \vec S_{m}(t)\bigr)\Bigr\}\\
&= \E_\beta\Bigl\{\int_{\mathbb{S}^2} \cdots \int_{\mathbb{S}^2} f\bigl( s_n, \cdots, s_{m}\bigl) p\bigl(\vec S_{n-1}(t), s_n\bigr)\prod_{k=n}^{m-1} p(s_k,s_{k+1})\,ds_n \cdots ds_m \Bigr\}\\
&=\int_{\mathbb{S}^2}  \cdots \int_{\mathbb{S}^2}  f\bigl( s_n, \cdots, s_{m}\bigl)\prod_{k=n}^{m-1} p(s_k,s_{k+1})\,ds_n \cdots ds_m,
\end{align*}
which agrees with the Gibbs marginals appearing in \eqref{uncond exp}.
\end{proof}

To recapitulate, Proposition~\ref{P:same} shows that there exists a full measure set of initial data for which one can construct global good solutions to \eqref{E:S}.  Proposition~\ref{P:uniq} then guarantees the uniqueness of these global good solutions for a full measure set of initial data.  Finally, Proposition~\ref{P:same} proves that the Gibbs measure $d\mu_{Gibbs}^\beta$ is left invariant by the flow of \eqref{E:S}, thus completing the proof of Theorem~\ref{T:S GWP}.
\end{proof}



\begin{thebibliography}{100}

\bibitem{AL} M. J. Ablowitz and J. F. Ladik,
Nonlinear differential-difference equations.
J. Mathematical Phys. \textbf{16} (1975), 598--603. 


\bibitem{BanVega} V. Banica and L. Vega,
The initial value problem for the binormal flow with rough data.
Ann. Sci. \'Ec. Norm. Sup\'er. (4) \textbf{48} (2015), no. 6, 1423--1455.  

\bibitem{Bishop} R. L. Bishop, There is more than one way to frame a curve.
Amer. Math. Monthly \textbf{82} (1975), 246--251. 

\bibitem{Bourg:book} J. Bourgain, Global solutions of nonlinear Schr\"odinger equations.
American Mathematical Society Colloquium Publications, \textbf{46}. American Mathematical Society, Providence, RI, 1999. 

\bibitem{Bourg:IMNLS} J. Bourgain, Invariant measures for NLS in infinite volume.
Comm. Math. Phys. \textbf{210} (2000), no. 3, 605--620. 

\bibitem{CarlesKappeler} R. Carles and T. Kappeler,
Norm-inflation with infinite loss of regularity for periodic NLS equations in negative Sobolev spaces.
Bull. Soc. Math. France \textbf{145} (2017), no. 4, 623--642. 

\bibitem{CSU} N.-H. Chang, J. Shatah, and K. Uhlenbeck,
Schr\"odinger maps.
Comm. Pure Appl. Math. \textbf{53} (2000), no. 5, 590--602.  

\bibitem{Christ} M. Christ,
Power series solution of a nonlinear Schr\"odinger equation. \emph{Mathematical aspects of nonlinear dispersive equations}, 131--155,
Ann. of Math. Stud., \textbf{163}, Princeton Univ. Press, Princeton, NJ, 2007. 

\bibitem{CollianderOh} J. Colliander and T. Oh,
Almost sure well-posedness of the cubic nonlinear Schr\"odinger equation below $L^2(\mathbb{T})$.
Duke Math. J. \textbf{161} (2012), no. 3, 367--414.


\bibitem{Ding} W. Ding,
On the Schr\"odinger flows.
\emph{Proceedings of the International Congress of Mathematicians (Beijing, 2002), Vol. II}, 283--291, Higher Ed. Press, Beijing, 2002.

\bibitem{Dob} P. L. Dobruschin,
The description of a random field by means of conditional probabilities and conditions of its regularity.
Theory Probab. Appl. \textbf{13} (1968), no. 2, 197--224.

\bibitem{DS} A. Doliwa and P. M. Santini,
Integrable dynamics of a discrete curve and the Ablowitz-Ladik hierarchy. J. Math. Phys. \textbf{36} (1995), no. 3, 1259--1273. 

\bibitem{ErcLozano}
N. M. Ercolani and G. Lozano,
A bi-Hamiltonian structure for the integrable, discrete non-linear Schr\"odinger system.
Phys. D \textbf{218} (2006), no. 2, 105--121. 

\bibitem{FT:book} L. D. Faddeev and L. Takhtajan,
\emph{Hamiltonian methods in the theory of solitons.} Classics in Mathematics. Springer, Berlin, 2007. 

\bibitem{Semiclassical} J. Fr\"ohlich, A. Knowles, and E. Lenzmann,
Semi-classical dynamics in quantum spin systems.
Lett Math Phys \textbf{82} (2007), no. 2--3, 275--296.

\bibitem{Gilbert} T. L. Gilbert,
A phenomenological theory of damping in ferromagnetic materials.
IEEE Transactions on Magnetics \textbf{40} (2004), no. 6, 3443--3449.

\bibitem{GrunHerr}
A. Gr\"unrock and S. Herr,
Low regularity local well-posedness of the derivative nonlinear Schr\"odinger equation with periodic initial data.
SIAM J. Math. Anal. \textbf{39} (2008), no. 6, 1890--1920. 

\bibitem{GuoOh} Z. Guo and T. Oh,
Non-Existence of Solutions for the Periodic Cubic NLS below $L^2$.
Int. Math. Res. Not. IMRN 2018, no. 6, 1656--1729. 

\bibitem{Haldane} F. D. M. Haldane, Excitation spectrum of a generalised Heisenberg ferromagnetic spin chain with arbitrary spin.
J. Phys. C: Solid State Physics \textbf{15} (1982), no. 36, L1309--L1312.

\bibitem{Hasimoto} H. Hasimoto, A soliton on a vortex filament. J. Fluid Mech. \textbf{51} (1972), no. 3, 477--485. 

\bibitem{HongYang}
Y. Hong and C. Yang,
Strong convergence for discrete nonlinear Schr\"odinger equations in the continuum limit.
Preprint \texttt{arXiv:1806.07542}.

\bibitem{Ishimori} Y. Ishimori, An integrable classical spin chain.
J. Phys. Soc. Jpn. \textbf{51} (1982), no. 11, 3417--3418.

\bibitem{IK} A. G. Izergin and V. E. Korepin,
A lattice model connected with a nonlinear Schr\"odinger equation.
Dokl. Akad. Nauk SSSR \textbf{259} (1981), no. 1, 76--79. 

\bibitem{JerrardSmets} R. L. Jerrard and D. Smets,
On the motion of a curve by its binormal curvature.
J. Eur. Math. Soc. (JEMS) \textbf{17} (2015), no. 6, 1487--1515. 

\bibitem{KVZ} R. Killip, M. Visan, and X. Zhang,
Low regularity conservation laws for integrable PDE.
To appear in Geom. Funct. Anal.


\bibitem{Kishimoto}
N. Kishimoto,
A remark on norm inflation for nonlinear Schr\"odinger equations.
Preprint \texttt{arXiv:1806.10066}.

\bibitem{KochTataru} H. Koch and D. Tataru,
Conserved energies for the cubic NLS in 1-d.
Preprint \texttt{arXiv:1607.02534}.

\bibitem{Laks} M. Lakshmanan, Continuum spin system as an exactly solvable dynamical system.
Phys. Lett. A \textbf{61} (1977), no. 1, 53--54.

\bibitem{LL} L. Landau and E. Lifshitz,
On the theory of the dispersion of magnetic permeability in ferromagnetic bodies.
Phys. Z. Sowjet. \textbf{8} (1935), 153--169.

\bibitem{LR} O. E. Lanford and D. Ruelle,
Observables at infinity and states with short range correlations in statistical mechanics.
Comm. Math. Phys. \textbf{13} (1969), no. 3, 194--215.

\bibitem{LRS} J. L. Lebowitz, H. A. Rose, and E. R. Speer,
Statistical mechanics of the nonlinear Schr\"odinger equation.
J. Stat. Phys. \textbf{50} (1988), no. 3--4, 657--687.

\bibitem{LL9}  E. M. Lifshitz and L. P. Pitaevskii, \emph{Statistical physics. Part 2. Theory of the condensed state.}
Course of theoretical physics Vol. 9.  Translated from the Russian by J. B. Sykes and M. J. Kearsley. Pergamon Press, Oxford-Elmsford, N.Y., 1980. 

\bibitem{Lozano}  G. I. Lozano, \emph{Poisson geometry of the Ablowitz-Ladik equations.}
Ph.D. thesis, The University of Arizona, 2004. 

\bibitem{Magri} F. Magri,
A simple model of the integrable Hamiltonian equation.
J. Math. Phys. \textbf{19} (1978), no. 5, 1156--1162.  

\bibitem{Koiso} N. Koiso,
The vortex filament equation and a semilinear Schr\"odinger equation in a Hermitian symmetric space.
Osaka J. Math. \textbf{34} (1997), no. 1, 199--214. 

\bibitem{Peter} P. Petersen, Classical Differential Geometry.  Lecture notes, available from the authors web-page: \verb!http://www.math.ucla.edu/~petersen/DGnotes.pdf!

\bibitem{RobThom} J. A. G. Roberts and C. J. Thompson,
Dynamics of the classical Heisenberg spin chain.
J. Phys. A \textbf{21} (1988), no. 8, 1769--1780. 

\bibitem{RodRub} I. Rodnianski, Y. A. Rubinstein, and G. Staffilani,
On the global well-posedness of the one-dimensional Schr\"odinger map flow.
Anal. PDE \textbf{2} (2009), no. 2, 187--209. 

\bibitem{Skly} E. K. Sklyanin, Some algebraic structures connected with the Yang-Baxter equation. Functional Anal. Appl. \textbf{16} (1982), no. 4, 263--270. 

\bibitem{Bardos&c}  P.-L. Sulem, C. Sulem, and C. Bardos,
On the continuous limit for a system of classical spins.
Comm. Math. Phys. \textbf{107} (1986), no. 3, 431--454. 

\bibitem{Takh} L. A. Takhtajan,
Integration of the continuous Heisenberg spin chain through the inverse scattering method.
Phys. Lett. A \textbf{64} (1977), no. 2, 235--237. 

\bibitem{Vaninsky} K. L. Vaninsky,
An additional Gibbs' state for the cubic Schr\"odinger equation on the circle.
Comm. Pure Appl. Math. \textbf{54} (2001), no. 5, 537--582. 

\bibitem{ZakhTakh} V. E. Zakharov and L. A. Takhtadzhyan, Equivalence of the nonlinear Schr\"odinger equation and the equation of a Heisenberg ferromagnet.
Theor. Math. Phys. \textbf{38} (1979), no. 1, 17--23.
\end{thebibliography}
\end{document}